\documentclass{article}
\usepackage{amsfonts}
\usepackage[mathscr]{eucal}
 \usepackage{amsmath}
 \usepackage{amssymb}

\usepackage[T2A]{fontenc}
\usepackage[cp1251]{inputenc}
\usepackage[english]{babel}


\newtheorem{theorem}{Theorem}[section]
\newtheorem{lemma}[theorem]{Lemma}
\newtheorem{proposition}[theorem]{Proposition}
\newtheorem{definition}[theorem]{Definition}
\newtheorem{remark}[theorem]{Remark}
\newtheorem{corollary}[theorem]{Corollary}

\newcommand{\Rset}{\mathbb R}

\textwidth=150 true mm \textheight=230 true mm \oddsidemargin=0.5cm
\tolerance 9000 \topmargin=-1.5cm

\newcommand{\B}{\mathcal B}

\newcommand{\al}{{\alpha}}

\newcommand{\Om}{{\Omega}}

\newcommand{\si}{{\sigma}}
\newcommand{\va}{{\varphi}}
\newcommand{\vr}{{\varrho}}

\newcommand{\eps}{\varepsilon}
\newcommand{\ga}{{\gamma}}

\newcommand{\cA}{{\cal A}}
\newcommand{\cB}{{\cal B}}

\newcommand{\cF}{{\cal F}}

\newcommand{\cM}{{\cal M}}
\newcommand{\cN}{{\cal N}}

\newcommand{\cL}{{\cal L}}

\newcommand{\cW}{{\cal W}}

\newcommand{\sB}{\mathscr{B}}

\newcommand{\sR}{\mathscr{R}}

\newcommand{\sW}{\mathscr{W}}

\newcommand{\wla}{\widetilde{\lambda}}

\newcommand{\fA}{\mathfrak{A}}

\newcommand{\pd}{{\partial}}
\newcommand{\hf}{{\frac12}}

\newenvironment{declaration}[1]{\trivlist
\item[\hskip \labelsep{\bf #1 }]\ignorespaces}{\endtrivlist}
\newenvironment{proofof}[1]{\begin{declaration}{#1}}{\hfill
$\square$ \end{declaration}}
\newenvironment{proof}{\begin{proofof}{Proof.}}{\end{proofof}}

\begin{document}

\title{Dynamics of second order in time evolution equations \\ with state-dependent delay}
\author{Igor Chueshov$^{a,}$\footnote{\footnotesize Corresponding
author. E-mails:  chueshov@karazin.ua (I.Chueshov),
rezounenko@yahoo.com (A.Rezounenko)\ $^a$Department of Mechanics and
Mathematics, \ Karazin Kharkov National University,  Kharkov, 61022,
Ukraine\ $^b$Institute of Information Theory and Automation, Academy
of Sciences of the Czech Republic, P.O.Box~18, 18208,~CR}
 ~~ and ~~ Alexander  Rezounenko$^{a,b}$
 }

  \maketitle
\begin{abstract}
We deal with a class of second order in time nonlinear evolution
equations with state-dependent delay.
This class covers several important PDE models arising in the theory of
nonlinear plates. Our first result states well-posedness in a
certain space of functions which are $C^1$ in time. In contrast with
the first order models with discrete state-dependent delay this
result does not require any compatibility conditions. The solutions
constructed generate a dynamical system in a $C^1$-type space over
delay time interval. Our next result shows that this dynamical
system possesses
 compact global and exponential attractors of finite fractal dimension.
 To obtain this result we adapt the recently developed method of quasi-stability estimates.

\par\noindent
{\bf Keywords: } second order evolution equations,   state dependent delay,
nonlinear plate, finite-dimensional attractor.
\par\noindent
{\bf 2010 MSC:} 35R10, 
35B41, 
74K20, 
93C23. 
\end{abstract}

\section{Introduction}
Our main goal is to study well-posedness and  asymptotic dynamics of
second order in time  equations with delay  of the form
\begin{equation}\label{sdd-2nd-01}
\ddot u(t)+ k\dot u(t) +A u(t) + F(u(t)) + M(u_t) = 0, \quad t>0,
\end{equation}
in some Hilbert space $H$. Here the dot over an element means time
derivative, $A$ is  linear and $F(\cdot)$ is nonlinear operators,
$M(u_t)$ represents (nonlinear) delay effect in the dynamics. All
these objects will be specified later.
\par
The main model we keep in mind is a nonlinear plate equation
of the form
\begin{equation}\label{sdd-2nd-plate}
\pd_{tt} u(t,x)+ k\pd_t u(t,x) +\Delta^2 u(t,x) + F(u(t,x)) +a u(t-\tau[u(t)],x) = 0, ~~x\in\Om,~ t>0,
\end{equation}
in a smooth bounded domain $\Om\subset\Rset^2$ with some boundary
conditions on $\pd\Om$. Here $\tau$ is a mapping defined on
solutions with values in some interval $[0,h]$, $k$ and $a$ are
constants. We assume that the plate is placed on some foundation;
the term $a u(t-\tau[u(t)],x)$ models effect of the Winkler type
foundation (see \cite{Selva-1979,Vlasov-1966}) with delay responce.
The nonlinear force $F$ can be Kirchhoff, Berger, or von Karman type
(see Section \ref{sect:ex-plate}). Our abstract model covers also
wave equation with state-dependent delay (see the discussion in
Section \ref{sect:ex-wave}).
\par
 We note that plate equations with
\emph{linear} delay terms were studied before mainly in  Hilbert
$L_2$-type
 spaces  on lag interval  (see, e.g.,
\cite{oldchueshov1,Chueshov-JSM-1992,CLW-delay,Cras-1995} and the
references therein). However this $L_2$-type situation does not
cover satisfactory the case of the state-depended delay of the form
described above. The point is that in this case the delay term in
\eqref{sdd-2nd-plate} is not even locally Lipschitz and thus
difficulties related to uniqueness may arise. The desire to have
Lipschitz property for this type delay terms leads naturally to
$C$-type  spaces which are not even reflexive. This provides us with
additional difficulties in contrast with the general theory
well-developed for second order in time equations in the Hilbert
space setting, see, e.g., \cite{Chueshov-Lasiecka-MemAMS-2008_book}
and also the literature cited there. In particular, in contrast with
the non-delayed  case (see
\cite{Chueshov-Lasiecka-MemAMS-2008_book,Chueshov-Lasiecka-2010_book,CL-hcdte-notes}),
in order to prove asymptotic smoothness of the flow (it is required
for the existence of a global attractor) we are enforced to assume
that the nonlinearity $F$ is either subcritical (in the sense
\cite{Chueshov-Lasiecka-MemAMS-2008_book}) of else the damping
coefficient $k$ in \eqref{sdd-2nd-01} is large enough. The main
reason for this is that we are not able to apply Khanmamedov's or
Ball's methods (see a discussion of both methods and the references
in \cite{CL-hcdte-notes}). The point is that we cannot guarantee
uniform in $t$ weak continuity in the phase space of the
corresponding functionals. Another reason is that the delay term
destroys the gradient structure of the model in the case of
potential nonlinearities $F$.
\par
The studies of    state-dependent delay models have a long history.
As it is mentioned in \cite{Hartung-Krisztin-Walther-Wu-2006}, early
discussion of differential equations with such a  delay goes back to
1806 when Poisson studied a geometrical problem. Since that time
many problems, initially described by differential equations without
delay or with constant delay, have been reformulated as equations
with state-dependent delay. It seems rather natural because many
models describing real world phenomena  depend on the past states of
the system.  Moreover,  it appears that  in many problems  the
constancy of the time delay is just an extra assumption which makes
the study  easier. The waiver of this assumption is naturally lead
to more realistic models and simultaneously makes analysis more
difficult.
 The general theory of (ordinary) differential equations with state-dependent delay
 is developed only recently (see. e.g.,
 \cite{Krisztin-Arino_JDDE-2001,Mallet-Paret,Walther_JDE-2003} and also the survey
 \cite{Hartung-Krisztin-Walther-Wu-2006} and the references therein). This theory
 essentially differs from that of constant or time-dependent
 delays  (see the references above and also Remark~\ref{re:M-nonLip} below).
\par
As for partial differential
 equations (PDEs) with delay their  investigation  requires the combination of both theories,
  methods and machineries (PDEs and delayed ODEs).
  The general theory of delayed PDEs    was started  with
  \cite{Fitzgibbon-JDE-1978,Travis-Webb_TAMS-1974} on the abstract level and
  was developed in last decades mainly for parabolic   type models
with constant and time-dependent delays (see e.g., the monographs
\cite{Wu-book} and the survey \cite{Ruess-1996}).  Abstract
approaches for $C$-type
\cite{Fitzgibbon-JDE-1978,Travis-Webb_TAMS-1974} and  $L_p$-type
\cite{Kunisch-Schappacher-JDE-1983} phase spaces are available.
 Partial differential
 equations with state-dependent delay are essentially
 less investigated,  see the discussion in the papers
 \cite{Rezounenko_JMAA-2007,Rezounenko_NA-2009} devoted to the parabolic case.
Some results (mainly, the existence and uniqueness) for the second
order in time PDEs with constant  delay are also available. They are
based on a reformulation    of the problem as a first order system
and application of the theory of such systems (see, e.g.,
\cite{Fitzgibbon-JDE-1978}). We also use this idea to get a local
existence and uniqueness for problem \eqref{sdd-2nd-01}. However to
the best of our knowledge, well-posedness  and asymptotic dynamics
of second order in time partial differential equations with
state-dependent delay have not been studied before.
\par
In our  approach we employ the special structure of second order in
time systems to get a globally  well-posed initial value problem for
mild solutions. As a phase space we choose some space of $C^1$-type
functions. The solutions we deal with are also $C^1$ functions. To
construct them we rewrite the second order in time equation (for
unknown $u(t)$) as a first order system (for unknown vector
$(u(t),\dot u(t))$) and look for continuous (mild) solutions to the
system. However in contrast with  approaches based on the
general theory (see, e.g.,  \cite{Fitzgibbon-JDE-1978} and   also
\cite[Section 3]{Walther_JDE-2003} and \cite[Section
2]{Hartung-Krisztin-Walther-Wu-2006}) we take into account natural
``displacement-velocity" compatibility from the very beginning at
the level of the phase space.
The solutions constructed have the
desired Lipschitz (even $C^1$ in time) property for the first
coordinate $u(t)$. In a sense it is an intermediate case between two
standard classes of merely continuous (mild) and $C^1$ (classical)
solutions $(u(t),v(t)),\, t\in [-h,T), T>0$ for a general first
order in time system  with delay:
$$\left\{
\begin{array}{c}
  \dot u(t) = {\cal F} (u_t,v_t), \\
  \dot v(t) = {\cal G} (u_t,v_t). \\
\end{array}
\right.
$$
 We  emphasize that due to the structure of our problem we do not
need any nonlinear compatibility type relations involving the right hand sides
of equations  which usually arise for
general first order (even, finite-dimensional) systems when $C^1$
solutions are studied (see \cite{Walther_JDE-2003} and also the survey
\cite{Hartung-Krisztin-Walther-Wu-2006}). We also refer to Section~\ref{ode}
below for a discussion of other  features of our approach.
\par

Our main result states that the dynamical system generated by
\eqref{sdd-2nd-01} in the space $W$ (see (\ref{sdd-2nd-05}) below)
of $C^1$ functions on the delay time interval possesses a compact
global attractor of finite fractal dimension. To achieve this result
we involve the method of quasi-stability estimates suggested in
 \cite{chlJDE04} and developed in \cite{Chueshov-Lasiecka-MemAMS-2008_book,Chueshov-Lasiecka-2010_book},
 see  also the recent survey in \cite{CL-hcdte-notes}.
However owing to the structure of the phase space we cannot apply directly the results known
for abstract quasi-stable systems and thus we are enforced to reconstruct the corresponding
argument in our state-dependent delay case.

The paper is organized as follows. In Section \ref{sect:wp} we
introduce our basic hypotheses and prove a well-posedness result.
Further sections are devoted to long-time dynamics. We first prove
that the system is dissipative (see Section~\ref{sect:dis}). In
Section~\ref{sect:qs} we show that the system satisfies some kind of
quasi-stability estimate on an invariant bounded absorbing set. This
allows us to establish the existence of compact finite-dimensional
global and exponential attractors in Section~\ref{sect:attr}. The
concluding Section~\ref{sect:ex} illustrates are main results by
applications to plate and wave models.

\section{Well-posedness and generation of  a dynamical system}\label{sect:wp}
\nopagebreak
The main outcome of this section is the fact that problem (\ref{sdd-2nd-01})
generates  dynamical system in an appropriate linear phase space of $C^1$ functions.

In our study we assume that:
\begin{itemize}
  \item [{\bf (A1)}]  \emph{In (\ref{sdd-2nd-01}),  $A$  is a positive operator
with a discrete spectrum in a separable Hilbert space $H$ with
domain $D(A)\subset H$.  Hence there exists an orthonormal basis $\{ e_k \}$
of $ H$ such that $$ A e_k = \mu_k e_k,\quad\mbox{with } 0<\mu_1\le
\mu_2\le \dots ,\quad \ \lim_{k \to \infty} \mu_k=\infty.$$ }
\end{itemize}
We can define the spaces $D(A^{\alpha})$ for $\al>0$ (see, e.g., \cite{Lions-Magenes-book}).  For
$h>0,$ we denote for short $C_{\alpha}=C([-h,0]; D(A^{\alpha}))$ which
is a Banach space with the following norm:
$$
\vert v
\vert_{C_{\alpha}} \equiv \sup \{\parallel   A^\alpha v(\theta
)\parallel : \theta \in [-h,0] \}.
$$
Here and below, $\parallel \cdot\parallel$ is the norm of $H$, and
$(\cdot , \cdot ) $ is the corresponding hermitian product.
We  also write $C=C_0$.
\begin{itemize}
  \item [{\bf (F1)}]\emph{ The nonlinear (non-delayed) mapping $F: D(A^{1\over 2}) \to H$
  is locally Lipschitz, i.e., for any $R>0$ there is $L_{F,R}>0$
such that for any $u^1,u^2$ with  $||A^{1\over 2}u^i||\le R$,
one has
 \begin{equation*}
 ||F(u^1)-F(u^2)|| \le L_{F,R} ||A^{1\over 2}(u^1-u^2)||.
\end{equation*}}
\end{itemize}

To describe the delay term $M$ we need the following standard notations from the theory of delay differential equations.
In (\ref{sdd-2nd-01}) and below, if $z$ is a continuous function from $\Rset$ into a space $Y,$ then as
in \cite{Hale,Wu-book}
$z_t{\equiv} z_t(\theta)\equiv z(t+\theta)$, $\theta\in [-h,0]$, denotes
the element of $C([-h,0];Y),$ while $h>0$ presents the (maximal) retardation time.
\par
In our considerations an  important role is played by the choice
of a phase space (see Remark~\ref{re:M-nonLip} below). We  use
the following one:
 \begin{equation}\label{sdd-2nd-05}
 W \equiv
 C([-h,0]; D(A^{1\over 2})) \cap C^1([-h,0]; H), 
\end{equation}
endowed with the norm $|\varphi|_W=|\varphi |_{C_{1/
2}}+|\dot\varphi |_{C_0}$
\par
We accept the following (basic) hypothesis concerning the delay term.
\begin{itemize}
  \item [{\bf (M1)}]\emph{
   The nonlinear delay term $M: W \mapsto H$
 is locally Lipschitz in the sense that
 \begin{equation*}
   \|M(\varphi^1)-M(\varphi^2)\|\le C_\vr \left[    |\varphi^1-\varphi^2|_{C_{1/2}}+
      |\dot\varphi^1-\dot\varphi^2|_{C_0}\right]
 \end{equation*}
 for every $\varphi^1, \varphi^2\in W$,   $|\varphi^j|_W\le\vr$, $j=1,2$.}
\end{itemize}

\begin{remark}\label{re:M-nonLip}{\rm
The main (benchmark) example\footnote{A more
general situation is described in hypothesis \textbf{(M3)} and Remark~\ref{re:M-tau} below.}
   of a  state-dependent delay term is
\begin{equation}\label{sdd-2nd-04}
 M(\varphi) =  \varphi(- \tau(\varphi)), \quad \varphi \in C, 
\end{equation}
where $\tau$ maps $C$ into some interval $[0,h]$. We notice that
this (discrete time) delay term $M$ is not locally Lipschitz in the
classical space of continuous functions $C=C([-h,0]; H)$, no matter
how smooth  the delay function $\tau : C\to [0,h]$ is. This may lead
to the non-uniqueness of solutions (see a discussion in the survey \cite{Hartung-Krisztin-Walther-Wu-2006}
 and the references wherein).
 This makes the study of
differential equations with state-dependent delays quite  different
from the one of equations with constant or time-dependent delays
\cite{Walther_book,Hale}. In such a situation the proof of the
well-posedness of a system requires additional efforts. For
instance, the main approach to $C^1$-solutions of general delay
equations is the so-called "solution manifold method"
\cite{Hartung-Krisztin-Walther-Wu-2006,Walther_JDE-2003} (see also
\cite{Rezounenko-Zagalak-DCDS-2013} for a parabolic PDE case) which
assumes some type of compatibility condition. It should be also
noted that there is an alternative approach avoiding  (nonlinear)
compatibility hypotheses. However it is
 based on an additional hypotheses concerning the delay mechanism
\cite{Rezounenko_NA-2009,Rezounenko_JMAA-2012}. Thus it is important
to deal with spaces in which we can guarantee a  Lipschitz property
for the mapping in  \eqref{sdd-2nd-04}. This is why to cover the
case we are enforced  to avoid  the space $C$ for the description of
initial data. For the same reason we cannot also use the idea
applied in  \cite{Kunisch-Schappacher-JDE-1983} and also in the
papers \cite{oldchueshov1,Chueshov-JSM-1992,CLW-delay,Cras-1995}
which deal with $L_2$-type spaces over the time delay interval. In
contrast, as we can see below the choice  of a Banach space of the
form \eqref{sdd-2nd-05} as a phase space allows us to guarantee
local Lipschitz property for the term in  \eqref{sdd-2nd-04}.
Moreover, this phase space takes into account  the natural
``displacement-velocity" relation from the very beginning. }
\end{remark}
\par
Thus bearing in mind the discussion above
we consider equation (\ref{sdd-2nd-01}) with the following initial
data
 \begin{equation}\label{sdd-2nd-IC}
 u_0=u_0(\theta )\equiv u(\theta ) = \varphi (\theta ), \quad\mbox{for}\; \theta \in [-h,0],
~~\va\in W.
\end{equation}

We can rewrite equation (\ref{sdd-2nd-01})  as the following first
order differential equation
 \begin{equation}\label{sdd-2nd-1st}
{d\over dt} U(t) + {\cal A} U(t) = {\cal N} (U_t), \quad t>0,~~
 \end{equation}
in the space $Y=D(A^{1/2})\times H$,
where $U(t)=(u(t); \dot u(t))$. 
Here the operator
${\cal A}$ and the map ${\cal N}$ are defined by
$$
{\cal A}U= (-v;
Au+ k v), \, \mbox{ for} \quad U=(u;v)\in D({\cal A})\equiv
D(A)\times D(A^{1/2})
$$
\begin{equation}\label{N-def}
{\cal N}(\Phi)= (0;F(\varphi(0))+M(\varphi)) \, \mbox{ for} \quad
\Phi=(\varphi;\dot \varphi), \, \varphi\in W.
\end{equation}
One can show
(see. e.g., \cite{Chueshov_Acta-1999_book})
 that the operator $\cA$ generates exponentially stable $C_0$-semigroup $e^{-\cA t}$ in $Y$.
\smallskip\par
The representation in  \eqref{sdd-2nd-1st} motivates the following definition.

\begin{definition}
  \label{de:mild} A {\it mild solution} of (\ref{sdd-2nd-01}),
(\ref{sdd-2nd-IC}) on an interval $[0,T]$ is defined as a function
\[
u \in C([-h,T];
D(A^{1/2})) \cap C^1([-h,T]; H),
\]
such that $u(\theta)=\varphi(\theta), \theta\in [-h,0]$ and  $U(t)\equiv (u(t);\dot u(t))$\footnote{
Below  $U(t)$ is also occasionally  called  by a mild solution.
} satisfies
\begin{equation}\label{sdd-2nd-05a}
 U(t)=  e^{-t{\cal A}}U(0) +\int^t_0 e^{-(t-s){\cal A}} {\cal N} (U_s)ds,\quad t\in [0,T].
\end{equation}
Similarly we can also define a mild solution on the semi-interval $[0,T)$.
\end{definition}

We can easily prove the following local result.

\begin{proposition}\label{pr:loc-exist}
Let (A1), (F1) and (M1) be valid. Then for any
$\varphi\in W$ there exist $T_\varphi>0$ and a unique mild solution
$U(t)\equiv (u(t);\dot u(t))$ of (\ref{sdd-2nd-01}),
(\ref{sdd-2nd-IC}) on the semi-interval interval $[0,T_\varphi)$. Solutions
continuously  depend on initial function $\varphi\in W$.
\end{proposition}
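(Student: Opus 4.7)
The plan is to solve the integral equation \eqref{sdd-2nd-05a} by the Banach contraction principle in a complete metric space of $C^1$-in-time functions that already encodes the prescribed initial history. Fix $\varphi\in W$ and $R>2|\varphi|_W$, and for $T>0$ let
\[
X_T(\varphi,R)=\bigl\{u\in C([-h,T]; D(A^{1/2}))\cap C^1([-h,T]; H):\ u|_{[-h,0]}=\varphi,\ \sup_{t\in[0,T]}|u_t|_W\le R\bigr\},
\]
a closed subset of the Banach space of such functions endowed with the norm $\|u\|_{X_T}=\sup_{t\in[-h,T]}\|A^{1/2}u(t)\|+\sup_{t\in[-h,T]}\|\dot u(t)\|$. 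For any $u\in X_T(\varphi,R)$ the shift $U_s=(u_s;\dot u_s)$ lies in $W$ with $|U_s|_W\le R$, so by (F1) and (M1) the map $s\mapsto \cN(U_s)$ is bounded by a constant $K(R)$ and continuous from $[0,T]$ into $H$ (the continuity of $s\mapsto u_s$ in $W$ follows from the uniform continuity of $A^{1/2}u$ and $\dot u$ on the compact $[-h,T]$). Define the operator $\Psi$ by $\Psi u=v$, where $v|_{[-h,0]}=\varphi$ and
\[
V(t)=\bigl(v(t);\dot v(t)\bigr)=e^{-t\cA}\bigl(\varphi(0);\dot\varphi(0)\bigr)+\int_0^t e^{-(t-s)\cA}\cN(U_s)\,ds,\quad t\in[0,T].
\]

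The first check is that $\Psi$ sends $X_T(\varphi,R)$ into itself for $T$ small. The Duhamel formula yields $V\in C([0,T]; Y)$ with $Y=D(A^{1/2})\times H$; the block structure $\cA(u;v)=(-v;Au+kv)$ of the generator ensures that the two components of $V$ are precisely $v$ and its time derivative $\dot v$, and the matching $V(0)=(\varphi(0);\dot\varphi(0))$ allows gluing with $\varphi$ to give $v\in C([-h,T]; D(A^{1/2}))\cap C^1([-h,T]; H)$. Writing $\kappa=\sup_{t\in[0,T]}\|e^{-t\cA}\|_{Y\to Y}$, one has $\sup_{t\in[0,T]}\|V(t)\|_Y\le \kappa|\varphi|_W+\kappa K(R)T$, while $|(\Psi u)_t|_W$ is controlled by $|\varphi|_W$ for $t\le h$ and by the preceding quantity for $t\ge h$; a smallness condition $T\le T_1(\varphi,R)$ therefore secures $|(\Psi u)_t|_W\le R$ for all $t\in[0,T]$.

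The key step is contraction. Given $u^1,u^2\in X_T(\varphi,R)$ and writing $V^i=\Psi u^i$, the difference vanishes on $[-h,0]$ and
\[
V^1(t)-V^2(t)=\int_0^t e^{-(t-s)\cA}\bigl(\cN(U^1_s)-\cN(U^2_s)\bigr)\,ds.
\]
By (F1) and (M1) applied with bound $R$,
\[
\|\cN(U^1_s)-\cN(U^2_s)\|\le L_{F,R}\|A^{1/2}(u^1(s)-u^2(s))\|+C_R\bigl(|u^1_s-u^2_s|_{C_{1/2}}+|\dot u^1_s-\dot u^2_s|_{C_0}\bigr)\le L(R)\|u^1-u^2\|_{X_T},
\]
so that $\|\Psi u^1-\Psi u^2\|_{X_T}\le \kappa L(R)T\|u^1-u^2\|_{X_T}$. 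Choosing $T=T_0$ small enough that $\kappa L(R)T<1$ makes $\Psi$ a strict contraction; the Banach fixed-point theorem yields a unique mild solution on $[0,T_0]$, and the standard continuation argument extends it to a maximal semi-interval $[0,T_\varphi)$. Continuous dependence on initial data follows analogously: for $\varphi,\psi\in W$ of $W$-norm at most $r$, the same Lipschitz estimate combined with Gronwall's lemma gives $\|u^\varphi-u^\psi\|_{X_T}\le C(r,T)|\varphi-\psi|_W$ on any common interval of existence.

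The main obstacle is guaranteeing that the Duhamel pair $V(t)$, which the abstract semigroup theory only delivers as an element of $C([0,T];Y)$, actually has its second coordinate equal to the classical time derivative of the first, so that $v\in C^1([-h,T];H)$ with the correct matching $\dot v(0)=\dot\varphi(0)$. This is the point at which the choice of $W$ in \eqref{sdd-2nd-05} pays off: the displacement-velocity compatibility is built into the phase space itself, and the block form of $\cA$ transfers it automatically from the initial datum to the solution, so that no additional nonlinear compatibility hypothesis of the type discussed in Remark~\ref{re:M-nonLip} is required.
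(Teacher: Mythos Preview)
Your argument is correct and is precisely the approach the paper intends: a Banach fixed-point argument in the space $C([-h,T];D(A^{1/2}))\cap C^1([-h,T];H)$ for small $T$, exactly as the paper states in its one-sentence proof. You have simply supplied the standard details (self-mapping, contraction, continuation, continuous dependence) and correctly isolated the one nontrivial structural point---that the block form of $\cA$ forces the second component of the Duhamel pair to be the time derivative of the first---which is what makes the $C^1$-in-time framework close without any additional compatibility condition. One small imprecision: for $t\in(0,h]$ the segment $(\Psi u)_t$ already contains values from the Duhamel part on $(0,t]$, so it is controlled by $\max\bigl(|\varphi|_W,\ \kappa|\varphi|_W+\kappa K(R)T\bigr)$ rather than by $|\varphi|_W$ alone; this does not affect the argument, since choosing $R>\kappa|\varphi|_W$ (instead of merely $R>2|\varphi|_W$) and $T$ small still yields the self-mapping.
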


\begin{proof}
The argument for the local existence and uniqueness of a mild solution
 is standard (see, e.g., \cite{Fitzgibbon-JDE-1978}) and uses the Banach fixed point
 theorem for a contraction mapping in the space
 $C([-h,T]; D(A^{1/2})) \cap C^1([-h,T]; H)$
 with appropriately small $T$.
 \end{proof}
\par
To obtain a global well-posedness result we need additional hypotheses concerning $F$ and $M$.
 As in the case of the second order models without delay (see \cite{Chueshov-Lasiecka-MemAMS-2008_book} and \cite{Chueshov-Lasiecka-2010_book})
we use the following set of assumptions concerning  $F$.
\begin{itemize}
  \item [{\bf (F2)}]
{\it  The nonlinear mapping $F: D(A^{1\over 2}) \to H$
has the form
\begin{equation*}
 F(u) = \Pi^\prime(u) +F^{*}(u),
\end{equation*}
where $\Pi^\prime(u)$ denotes Fr\'{e}chet derivative\footnote{This means
that $\Pi^\prime(u)$    is an element in $D(A^{1\over 2})'$
such that  $|\Pi(u+v)-\Pi(u)-\langle \Pi^\prime(u),v\rangle |=o(\|A^{1/2}v\|)$
for every $v\in D(A^{1\over 2})$
}
of  a $C^1$-functional
$\Pi(u): D(A^{1\over 2}) \to R$  and
the mapping $F^{*}: D(A^{1\over 2}) \to H$  is globally Lipschitz, i.e.
\begin{equation}\label{sdd-2nd-07}
 ||F^{*}(u^1)- F^{*}(u^1)||^2 \le c_0 || A^{1\over 2} (u^1-u^2)||^2, \qquad u^1,u^2 \in D(A^{1\over
2}).
\end{equation}
Moreover, we assume that $\Pi(u)=\Pi_0(u) + \Pi_1(u)$, with
$\Pi_0(u)\ge 0$, $\Pi_0(u)$ is bounded on bounded sets in
$D(A^{1\over 2})$ and $\Pi_1(u)$ satisfies the property
\begin{equation}\label{sdd-2nd-08}
\forall\,\eta>0\; \exists\, C_\eta>0:~~
 |\Pi_1(u)| \le \eta \left( || A^{1\over 2}u||^2 + \Pi_0(u)\right) + C_\eta, \qquad u \in D(A^{1/2}).
\end{equation}
}
\end{itemize}
As it is well-documented in \cite{Chueshov-Lasiecka-MemAMS-2008_book,Chueshov-Lasiecka-2010_book}
the second order models with nonlinearities satisfying \textbf{(F2)} arises in many applications
(see also the discussion in Section~\ref{sect:ex}).
\par
We
assume also
\begin{itemize}
  \item [{\bf (M2)}]
 {\it  The nonlinear delay term $M: W \to H$ satisfies the linear growth condition:

\begin{equation}\label{sdd-2nd-13}
 ||M(\varphi)|| \le M_0+ M_1 \left\{ \max_{\theta\in [-h,0]} ||A^{1/2} \varphi(\theta)||+
  \max_{\theta\in [-h,0]} ||\dot \varphi(\theta)||\right\},~~\forall\, \va\in W,
\end{equation}
 for some  $M_j \ge 0$.
}
\end{itemize}
The main result of this section is the following assertion.
\begin{theorem}[Well-posedness]\label{th:well-pos}  Let (A1), (F1), (F2), (M1), and (M2) be valid. Then for any
$\varphi\in W$ there exists an unique global mild solution
$U(t)\equiv (u(t);\dot u(t))$ of (\ref{sdd-2nd-01}),
(\ref{sdd-2nd-IC}) on the interval $[0,+\infty)$. Solutions
satisfy an energy equality of the form
\begin{equation}\label{sdd-2nd-12}
{\cal E}(u(t),\dot u(t)) +k\int^t_0||\dot u(s)||^2 ds = {\cal
E}(u(0),\dot u(0))  - \int^t_0(F^{*}(u(s)),\dot u(s))\, ds -
\int^t_0(M(u_s),\dot u(s))\, ds.
\end{equation}
Here we denote
\begin{equation}\label{sdd-2nd-10}
 {\cal E}(u,v)\equiv E(u,v) + \Pi_1(u), \qquad E(u,v)\equiv {1\over 2}\left( ||v||^2 + || A^{1\over 2}u||^2 \right) +
 \Pi_0(u).
\end{equation}
Moreover, for any $\vr>0$ and $T>0$ there exists  $C_{\vr,T}$ such that
\begin{equation}\label{sdd-2nd-lip-sol}
\| A^{1/2}(u^1(t)-u^2(t))\|  + \| \dot u^1(t)-\dot u^2(t)\|
\le C_{\vr,T} |\va^1 -\va^2|_W,~~~ t\in [0,T],
\end{equation}
for any couple $u^1(t)$ and $u^2(t)$ of mild solutions with initial
data $\va^1$ and $\va^2$ such that  $ |\va^j|_W\le\vr$.
\end{theorem}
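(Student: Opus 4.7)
The plan is to start from the local mild solution supplied by Proposition~\ref{pr:loc-exist} and extend it to $[0,+\infty)$ by extracting a uniform a priori bound on every finite interval; that bound will be read off from the energy identity~\eqref{sdd-2nd-12}, which must therefore be established first. The Lipschitz dependence~\eqref{sdd-2nd-lip-sol} will then follow from a parallel energy calculation applied to the difference of two solutions.

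The main difficulty lies in the rigorous derivation of~\eqref{sdd-2nd-12}: a mild solution has $\dot u$ only in $C([-h,T];H)$, so testing~\eqref{sdd-2nd-01} directly with $\dot u$ is not yet meaningful. My strategy is to regularize by finite-dimensional Galerkin projection. Let $P_n$ denote the orthogonal projector onto $\mathrm{span}\{e_1,\dots,e_n\}$ supplied by~\textbf{(A1)}, set $g(t):=-F(u(t))-M(u_t)\in C([0,T];H)$ with $u$ the given mild solution, and let $u^n$ solve the linear system $\ddot u^n+k\dot u^n+Au^n=P_n g(t)$ in $P_nH$ with projected initial data $P_n\va$. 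On each one-dimensional eigencomponent this is a scalar ODE with continuous right-hand side, so multiplication by $\dot u^n_k$ and integration in $t$ are legitimate; summing over $k\le n$ yields the finite-dimensional version of~\eqref{sdd-2nd-12}, the $\Pi$-term appearing via the $C^1$ chain rule $\tfrac{d}{dt}\Pi(u^n)=\langle \Pi'(u^n),\dot u^n\rangle$ provided by~\textbf{(F2)}. Since $u^n\to u$ in $C([0,T];D(A^{1/2}))\cap C^1([0,T];H)$ by the variation-of-constants formula in $Y$ together with the $C_0$-semigroup property of $e^{-t\cA}$, the local Lipschitz bound~\eqref{sdd-2nd-07} on $F^*$ and continuity of $\Pi$ on $D(A^{1/2})$ let us pass to the limit and recover~\eqref{sdd-2nd-12} for $u$ itself.

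With the energy identity in hand, the assumption~\eqref{sdd-2nd-08} applied with small $\eta$ yields $\cE(u,v)\ge \tfrac12 E(u,v)-C$, so $\cE$ controls $\|A^{1/2}u\|^2+\|\dot u\|^2$ up to an additive constant. Dropping the nonnegative damping contribution in~\eqref{sdd-2nd-12}, bounding $|(F^*(u(s)),\dot u(s))|$ by~\eqref{sdd-2nd-07} and $|(M(u_s),\dot u(s))|$ by~\eqref{sdd-2nd-13} through Cauchy--Schwarz, and noting that $|u_s|_W^2$ is dominated by $\sup_{-h\le r\le s}(\|A^{1/2}u(r)\|^2+\|\dot u(r)\|^2)+|\va|_W^2$, a Gronwall argument for $\Psi(t):=\sup_{-h\le r\le t}E(u(r),\dot u(r))$ yields a bound on $[0,T]$ depending only on $|\va|_W$ and $T$. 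This precludes blow-up of $|u_t|_W$ in finite time, so the standard continuation principle extends the solution to $[0,+\infty)$. Finally, for two solutions $u^1,u^2$ with $|\va^j|_W\le\vr$, the difference $w=u^1-u^2$ satisfies $\ddot w+k\dot w+Aw=-(F(u^1)-F(u^2))-(M(u^1_t)-M(u^2_t))$; the same Galerkin-and-pass-to-limit procedure produces an energy identity for $\tfrac12(\|\dot w\|^2+\|A^{1/2}w\|^2)$, and invoking the local Lipschitz bounds~\textbf{(F1)} and~\textbf{(M1)} on the ball of radius $\vr$ (no potential structure is needed for the difference) followed by a Gronwall inequality delivers~\eqref{sdd-2nd-lip-sol}.
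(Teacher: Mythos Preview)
Your overall strategy coincides with the paper's: local existence from Proposition~\ref{pr:loc-exist}, energy identity, a~priori bound via~\eqref{sdd-2nd-08},~\eqref{sdd-2nd-07},~\eqref{sdd-2nd-13} and Gronwall on $\sup_{[-h,t]}E$, then the difference equation for~\eqref{sdd-2nd-lip-sol}. The only methodological difference is that the paper obtains the linear energy relation~\eqref{sdd-2nd-12lin} by viewing $u$ as a mild solution of the linear non-delayed problem~\eqref{sdd-2nd-10a} with forcing $f^u(t)=F(u(t))+M(u_t)$ and citing standard theory, whereas you regularize via Galerkin projection; both are standard and lead to the same place.

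There is, however, a technical slip in your Galerkin step. In your projected system the forcing is $P_n g(t)$ with $g(t)=-F(u(t))-M(u_t)$ built from the \emph{true} solution $u$, not from $u^n$. After multiplying by $\dot u^n$ the relevant term is $(\Pi'(u(s)),\dot u^n(s))$, not $(\Pi'(u^n(s)),\dot u^n(s))$, so the chain rule $\tfrac{d}{dt}\Pi(u^n)=\langle\Pi'(u^n),\dot u^n\rangle$ does not convert it into a total derivative as you claim. The clean fix is to pass to the limit $n\to\infty$ \emph{first} in the purely quadratic balance (yielding the paper's intermediate identity~\eqref{sdd-2nd-12lin}), and only \emph{then} justify
\[
\int_0^t(\Pi'(u(s)),\dot u(s))\,ds=\Pi(u(t))-\Pi(u(0))
\]
by a separate approximation of $u$ in $C([0,T];D(A^{1/2}))\cap C^1([0,T];H)$ by smoother paths for which the chain rule holds, using~\textbf{(F1)} to control the error. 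This is precisely what the paper hides behind the phrase ``firstly performed on smooth functions''.

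One further small point: in your Lipschitz argument, \textbf{(F1)} and~\textbf{(M1)} must be invoked on the ball of radius determined by the a~priori bound on $[0,T]$ (which depends on $\vr$ and $T$), not on the ball of radius~$\vr$ itself.
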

\begin{proof}
 The local existence and uniqueness of mild solutions are given by Proposition~\ref{pr:loc-exist}.
Let $ U=(u;\dot u)$   be a mild solution of (\ref{sdd-2nd-01}) and (\ref{sdd-2nd-IC}) on
the (maximal) semi-interval $[-h,T_\varphi)$ and
\[
f^u(t)\equiv F(u(t)) + M(u_t)\in C([0,T_\varphi);H).
\]
It is clear that we can consider $(u(t);\dot u(t))$ as a
 mild solution of the linear \emph{non-delayed} problem
\begin{equation}\label{sdd-2nd-10a}
\ddot v(t) +A v(t) + k\dot v(t) + f^u(t) = 0, \quad t\in
[0,T_\varphi),\quad (v(0);\dot v(0))=(\va (0); \dot\va (0)) \in Y.
\end{equation}
Therefore (see, e.g., \cite{Chueshov_Acta-1999_book}) one can see that $u(t)$ satisfies
the energy relation of the form
\begin{equation}\label{sdd-2nd-12lin}
E_0(u(t),\dot u(t)) +k\int^t_0||\dot u(s)||^2 ds = E_0(u(0),\dot u(0))  -
 \int^t_0(f^u(s),\dot u(s))\, ds,~~~ t<T_\va,
\end{equation}
where $ E_0(u,v)=\hf\left( \|A^{1/2}u\|^2+ \|v\|^2\right)$.
Using the structure of $f^u$ after some calculations (firstly performed on smooth functions)
we can show that
\[
 \int^t_0(f^u(s),\dot u(s))\, ds= \Pi(u(t))-\Pi(u(0)) + \int^t_0(F^*(u(s))+M(u_s),\dot u(s))\, ds.
\]
Therefore \eqref{sdd-2nd-12lin} yields \eqref{sdd-2nd-12} for every $t<T_\va$.
\par
By (\ref{sdd-2nd-07}) we have that $||F^{*}(u) ||\le
\sqrt{c_0}||A^{1/2}u|| +||F^{*}(0)||$. Therefore using
(\ref{sdd-2nd-12}) and (\ref{sdd-2nd-13})
we obtain that
\begin{align}\label{sdd-2nd-14}
  {\cal E}(u(t),\dot u(t)) +{k\over 2}\int^t_0||\dot u(s)||^2 ds
 \le &
{\cal E}(u(0),\dot u(0)) +  c_1 \int^t_0
(1+ ||A^{1\over 2}u(s)||^2)\, ds
\\ &
 +  c_2
\int^t_0\left[\max_{\theta\in [-h,0]} ||A^{1\over 2}
u(s+\theta)||^2+\max_{\theta\in [-h,0]} ||
\dot u(s+\theta)||^2\right] \, ds.\notag
\end{align}
One can  see that
\begin{align}\label{sdd-2nd-15}
\max_{\theta\in [-h,0]} ||A^{1\over 2}
u(s+\theta)||^2+\max_{\theta\in [-h,0]} || \dot u(s+\theta)||^2 \le
|\va |_{W}^2+ 2 \max_{\sigma\in [0,s]} E(u(\sigma),\dot u(\sigma))
\end{align}
for every $s\in [0,T_\va)$.
It follows from
 (\ref{sdd-2nd-08}) that  there exists  a
constant $c>0$ such that
\begin{equation}\label{sdd-2nd-11}
 {1\over 2} E(u,v) - c \le {\cal E}(u,v) \le 2E(u,v) +c, ~~ u\in D(A^{1\over
2}), v\in H.
\end{equation}
Therefore
 we use  (\ref{sdd-2nd-11}) and (\ref{sdd-2nd-15}) to
 continue (see (\ref{sdd-2nd-14})) as follows
\begin{equation*}
\max_{\sigma\in [0,t]} E(u(\sigma),\dot u(\sigma)) \le c \left(
1+t+E(u(0),\dot u(0))+ t\cdot |\va|_W^2 + \int^t_0 \max_{\sigma\in [0,s]}
E(u(\sigma),\dot u(\sigma))\, ds\right).
\end{equation*}
The application of Gronwall's lemma (to the
function $p(t)\equiv \max_{\sigma\in [0,t]} E(u(\sigma),\dot
u(\sigma))$ yields the
following  (a priori) estimate
\begin{equation*}
\max_{\sigma\in [0,t]} E(u(\sigma),\dot u(\sigma)) \le C \left(
1+E(u(0),\dot u(0))+ |\va|_W^2 \right)\cdot e^{a t}, \quad a>0,~~ t<T_\va,
\end{equation*}
which allows us in the standard way to extend the solution on the semi-axis $\Rset_+$.
\par
To prove \eqref{sdd-2nd-lip-sol} we use the fact that the difference $u(t)=u^1(t)-u^2(t)$
solves the problem in \eqref{sdd-2nd-10a} with
\[
f^u(t)=F(u^1(t)) + M(u^1_t)   -  F(u^2(t)) - M(u^2_t).
\]
This  completes the proof of Theorem~\ref{th:well-pos}.
\end{proof}

Using Theorem~\ref{th:well-pos}
we can define
an \textbf{ evolution operator} $S_t : W \to W$ for all $t\ge 0$ by the
formula $S_t \varphi = u_t,$ where $u(t)$ is
the mild solution  of (\ref{sdd-2nd-01}),
(\ref{sdd-2nd-IC}), satisfying  $u_0=\varphi$.
This operator satisfies the semigroup property
and generates a dynamical system $(S_t;W)$
with the phase space $W$
 defined in (\ref{sdd-2nd-05}) (for the
definition and more on dynamical systems see, e.g.,
\cite{Babin-Vishik,Chueshov_Acta-1999_book,Temam_book}).

\begin{remark}\label{re:evol}
{\rm We can equivalently define the dynamical system on the linear
space of vector-functions $\widetilde{W}\equiv\{ \Phi=\left(
\varphi; \dot \varphi\right) \, | \, \varphi\in W \}\subset
C([-h,0]; D(A^{1\over 2})\times H).$ In this notations evolution
operator reads $\widetilde{S}_t \Phi \equiv U_t$ and we have $W
\backepsilon \varphi \stackrel{G}{\longmapsto} \left( \varphi; \dot
\varphi\right) \in \widetilde{W}$ satisfying $G S_t =
\widetilde{S}_t G$.
In fact we already have used this observation in Definition~\ref{de:mild}
and Proposition~\ref{pr:loc-exist}.
}
\end{remark}

We conclude this section with a discussion of the existence  of
smooth solutions to problem  (\ref{sdd-2nd-01})  and
(\ref{sdd-2nd-IC}). In the following assertion we show that under
additional hypotheses mild solutions become strong.
\begin{corollary}[Smoothness]\label{co:smooth}
 Let the hypotheses of  Theorem~\ref{th:well-pos}  be in force
 with assumption \textbf{(M1)} in  the following (stronger)
 form
 \begin{equation}\label{M-lip-str}
   \|M(\varphi^1)-M(\varphi^2)\|\le C_\vr     |\varphi^1-\varphi^2|_{C_{0}}
 \end{equation}
 for every $\varphi^1, \varphi^2\in W$,   $|\varphi^j|_W\le\vr$, $j=1,2$.
 If the initial function $\va(\theta)$ possesses the property
 \begin{equation}\label{smo-ini}
 \va(0)\in D(A),~~~ \dot\va(0)\in  D(A^{1/2}),
 \end{equation}
 then the solution $u(t)$ satisfies the relations
 \begin{equation}\label{smo-sol}
    u(t)\in L_\infty(0,T;  D(A)),  ~~ \dot u(t)\in L_\infty(0,T;  D(A^{1/2})), ~~
     \ddot u(t)\in L_\infty(0,T; H)
 \end{equation}
 for every $T>0$.
 If in addition $F(u)$ is  Fr\'{e}chet differentiable and
 $\|F'(u) v\|\le C_r\|A^{1/2}v\|$ for every $ u\in D(A)$ with $\|Au\|\le r$,
 then we have
 \begin{equation}\label{smo-sol-C}
    u(t)\in C(\Rset_+;  D(A)),  ~~ \dot u(t)\in C(\Rset_+;  D(A^{1/2})), ~~
     \ddot u(t)\in C(\Rset_+; H).
 \end{equation}
\end{corollary}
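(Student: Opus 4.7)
The plan is to establish \eqref{smo-sol} by a difference-quotient energy estimate and then upgrade to \eqref{smo-sol-C} via integration by parts in the Duhamel formula \eqref{sdd-2nd-05a}. For small $\varepsilon>0$, I would set $w_\varepsilon(t)\equiv u(t+\varepsilon)-u(t)$ on $[-h,T-\varepsilon]$; being the difference of two mild solutions, $w_\varepsilon$ satisfies the linear damped wave problem
$$\ddot w_\varepsilon+k\dot w_\varepsilon+Aw_\varepsilon=G_\varepsilon(t),\qquad G_\varepsilon(t)=-[F(u(t+\varepsilon))-F(u(t))]-[M(u_{t+\varepsilon})-M(u_t)],$$
with forcing bounded by \textbf{(F1)} and the strengthened \eqref{M-lip-str} as
$$\|G_\varepsilon(t)\|\le L_{F,R}\|A^{1/2}w_\varepsilon(t)\|+C_\vr\sup_{\theta\in[-h,0]}\|w_\varepsilon(t+\theta)\|,$$
the constants being finite thanks to the a priori bounds from Theorem~\ref{th:well-pos}.

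The key step is a uniform-in-$\varepsilon$ bound on the initial data of $w_\varepsilon$ in $D(A^{1/2})\times H$. Since by \eqref{smo-ini} we have $U(0)\in D(\cA)$, the standard identity $(e^{-\varepsilon\cA}-I)U(0)=-\int_0^\varepsilon e^{-\sigma\cA}\cA U(0)\,d\sigma$ yields $\|(e^{-\varepsilon\cA}-I)U(0)\|_Y\le C\varepsilon$, and the Duhamel term in \eqref{sdd-2nd-05a} at $t=\varepsilon$ is $O(\varepsilon)$ by boundedness of $\cN(U_s)$ on $[0,\varepsilon]$; hence $\|A^{1/2}w_\varepsilon(0)\|+\|\dot w_\varepsilon(0)\|\le C\varepsilon$. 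Splitting $\sup_{\theta\in[-h,0]}\|w_\varepsilon(s+\theta)\|$ into its past portion (bounded by $\varepsilon|\dot\va|_{C_0}$ since $\va\in C^1([-h,0];H)$) and its forward portion (bounded by $C\sup_{0\le\sigma\le s}\|A^{1/2}w_\varepsilon(\sigma)\|$ via the embedding $D(A^{1/2})\hookrightarrow H$), the energy identity for $w_\varepsilon$ together with Gronwall's inequality applied to $p(t)\equiv\sup_{s\le t}[\|A^{1/2}w_\varepsilon(s)\|^2+\|\dot w_\varepsilon(s)\|^2]$ yields $p(T)\le C(T)\varepsilon^2$. Since $u\in C^1([-h,T];H)$ forces $w_\varepsilon(t)/\varepsilon\to\dot u(t)$ in $H$ pointwise, Fatou's lemma applied to the spectral decomposition of $A$ delivers $\dot u\in L^\infty(0,T;D(A^{1/2}))$, while the uniform $H$-bound on $\dot w_\varepsilon(t)/\varepsilon$ gives $\dot u$ Lipschitz into $H$, hence $\ddot u\in L^\infty(0,T;H)$. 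Solving the equation for $Au$ then produces $u\in L^\infty(0,T;D(A))$, which completes the proof of \eqref{smo-sol}.

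To obtain \eqref{smo-sol-C}, I would use the additional $C^1$-hypothesis on $F$ with $\|F'(u)v\|\le C_r\|A^{1/2}v\|$ together with the bound $\dot u\in L^\infty(D(A^{1/2}))$ just established to conclude that $F(u(t))-F(u(s))=\int_s^t F'(u(\sigma))\dot u(\sigma)\,d\sigma$ makes $s\mapsto F(u(s))$ Lipschitz into $H$. Combined with the Lipschitz continuity of $s\mapsto M(u_s)$ into $H$ (from $\dot u\in C(H)$ and \eqref{M-lip-str}), this renders the forcing $\cN(U_\cdot)$ Lipschitz into $Y$. Integration by parts then gives
$$\cA\int_0^t e^{-(t-s)\cA}\cN(U_s)\,ds=-\cN(U_t)+e^{-t\cA}\cN(U_0)+\int_0^t e^{-(t-s)\cA}\tfrac{d}{ds}\cN(U_s)\,ds,$$
whose right-hand side is in $C([0,T];Y)$; combined with $e^{-t\cA}\cA U(0)\in C([0,T];Y)$ this shows $U\in C([0,T];D(\cA))$, and consequently $\dot U=-\cA U+\cN(U_\cdot)\in C([0,T];Y)$, which in components is precisely \eqref{smo-sol-C}. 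The main obstacle is the uniform-in-$\varepsilon$ initial-data bound for $w_\varepsilon$, which depends essentially on the extra regularity \eqref{smo-ini}; once it is in place, the energy estimate, the passage to the limit, and the Duhamel argument for continuity are standard.
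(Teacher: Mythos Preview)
Your argument is correct; it is a self-contained version of what the paper proves by citation. The paper's proof makes exactly the same key observation you do---that the strengthened hypothesis \eqref{M-lip-str} together with $u\in C^1([-h,T];H)$ forces $t\mapsto M(u_t)$ to be Lipschitz into $H$ (via $\|M(u_{t_1})-M(u_{t_2})\|\le C_{R_T}\max_\theta\|\int_{t_2+\theta}^{t_1+\theta}\dot u\,d\xi\|\le C_{R_T}R_T|t_1-t_2|$)---and then simply invokes an abstract regularity result for damped second-order equations with Lipschitz-in-time forcing (Theorem~2.3.8 in \cite{Chueshov-Lasiecka-2010_book}) to obtain \eqref{smo-sol}, and Proposition~2.4.37 of the same reference for \eqref{smo-sol-C}. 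Your difference-quotient energy estimate and the subsequent Duhamel integration by parts are precisely the standard mechanisms underlying those cited results, so you are reproving them in situ rather than quoting them. Your route is longer but has the advantage of being self-contained and transparent about where each hypothesis is used.

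Two small remarks. First, your splitting of $\sup_{\theta}\|w_\varepsilon(s+\theta)\|$ into past and forward parts is more elaborate than necessary: since $u\in C^1([-h,T];H)$ with $\|\dot u\|$ bounded by the a~priori estimate, one has directly $\|w_\varepsilon(\sigma)\|\le \varepsilon\sup_{[-h,T]}\|\dot u\|$ for all $\sigma$, so the delay contribution can be absorbed straight into the $O(\varepsilon)$ inhomogeneity. Second, the signs in your integration-by-parts identity are reversed; the correct version reads
\[
\cA\int_0^t e^{-(t-s)\cA}\cN(U_s)\,ds=\cN(U_t)-e^{-t\cA}\cN(U_0)-\int_0^t e^{-(t-s)\cA}\tfrac{d}{ds}\cN(U_s)\,ds,
\]
but this does not affect your conclusion that the right-hand side lies in $C([0,T];Y)$.
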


\begin{proof}
Let $u(t)$ be a solution.
By Theorem~\ref{th:well-pos} we have that
\[
\max_{[-h,T]}\left( \|A^{1/2} u(t)\|^2 +\| \dot u(t)\|^2\right)\le R_T
\]
for some $R_T$. Now we
 note that under condition
\eqref{M-lip-str} the function $t\mapsto f(t)\equiv M(u_t)$ is
Lipschitz on any interval $[0,T]$ with values in $H$. Indeed, by \eqref{M-lip-str} we have that
\[
 \|M(u_{t_1})-M(u_{t_2})\|\le C_{R_T} \max_{[-h,0]}\Big\|\int_{t_2+\theta}^{t_1+\theta}
 \dot u(\xi)d \xi\Big\|\le  C_{R_T} R_T|t_1-t_2|.
 \]
Thus  the
derivative $\dot f(t)$ (in the
sense of distributions) is bounded in $H$. This allows us to apply Theorem 2.3.8~\cite[p.63]{Chueshov-Lasiecka-2010_book}
(see also \cite[Chapter 4]{showalter})
to obtain
the conclusion in \eqref{smo-sol}.
\par
Property \eqref{smo-sol-C} follows from
 \cite[Proposition 2.4.37]{Chueshov-Lasiecka-2010_book}.
\end{proof}

\begin{remark}\label{re:smooth}
{\rm The property in  (\ref{M-lip-str}) means that $M$ is Lipschitz
on subsets in $C=C([-h,0]; H)$ which are bounded in $W$. Following
\cite[Definition 1.1, p.106]{Mallet-Paret} we call this property as
"locally almost Lipschitz" on $C$.  It is also remarkable that in
order to obtain strong solutions we need to assume an additional
smoothness of initial data in the right end point of the interval
$[-h,0]$  only (see \eqref{smo-ini}). A similar effect was observed
earlier in  \cite{Rezounenko_NA-2010,Rezounenko-Zagalak-DCDS-2013}
in the context of parabolic PDEs with discrete state-dependent
delay.
\par
We also note that under conditions of Corollary~\ref{co:smooth} with differentiable $F$
we have that solutions are $C^2$ on the semi-axis $\Rset_+$ with values in $H$,
and in $C^1$ on the extended semi-axis $[-h,+\infty)$.
Assuming the smoothness of the initial data $\va$ and
some compatibility conditions we can show that the solutions are $C^2$-smooth on  $[-h,+\infty)$.
More precisely, if we assume that
\begin{equation}\label{Wsm}
  \va\in  W_{sm}= C^2([-h,0]; H)\cap C^1([-h,0]; D(A^{1/2}))\cap C([-h,0]; D(A)),
\end{equation}
 then the solution $u$ possesses the property in \eqref{smo-sol-C}
 with $[-h,+\infty)$ instead of $\Rset_+$ if and only if this
 smoothness property holds in the zero moment. The later property is obviously valid if and only if
 we have the following compatibility condition
\begin{equation}\label{comp-C}
       \ddot{\va}(0) + k\dot\va (0) +A \va (0) + F(\va (0)) + M(\va) = 0.
\end{equation}
Moreover, one can see that the set
\begin{equation}\label{L-smoth}
\cL=\left\{ \va\in W_{sm}\, : \va ~~\mbox{satisfies (\ref{comp-C})}\right\} \subset W.
\end{equation}
is forward invariant with respect to the flow
$S_t$, i.e., $S_t\cL\subset\cL$ for all $t>0$.
Thus the dynamics is defined  in smother spaces.
 The set $\cL$ is an
analog to the solution manifold used in \cite{Walther_JDE-2003} for
the ODE case and in \cite{Rezounenko-Zagalak-DCDS-2013} for the  parabolic
PDE case
as a well-posedness class.
 }
\end{remark}

\section{Asymptotic properties: dissipativity}\label{sect:dis}
Now we start to study the long-time dynamics  of the system
$(S_t,W)$ generated by mild solutions to problem \eqref{sdd-2nd-01}.
For this we need to impose additional hypotheses. In analogy with
\cite{Chueshov-Lasiecka-MemAMS-2008_book} and  \cite[Chapter
8]{Chueshov-Lasiecka-2010_book} concerning the nonlinear
(non-delayed) term $F$ we assume
\begin{itemize}
  \item[{\bf (F3)}]
  {\it
 The nonlinear term $F: D(A^{1\over 2}) \to H$ (see (F2) above for notations) satisfies
\par
{\bf (a)}
 there are constants $\eta\in [0,1), c_4,c_5>0$
such that
\begin{equation}\label{sdd-2nd-19}
-(u,F(u))\le \eta ||A^{1\over 2}u||^2 -c_4 \Pi_0(u) +c_5,\quad u\in
D(A^{1\over 2});
\end{equation}
{\bf (b)}
for every $\widetilde{\eta}>0$ there exists $C_{\widetilde{\eta}}>0$
such that
\begin{equation}\label{sdd-2nd-20}
||u||^2\le C_{\widetilde{\eta}} +\widetilde{\eta}\left( ||A^{1\over
2}u||^2 +\Pi_0(u) \right), \quad u\in D(A^{1\over 2});
\end{equation}
{\bf (c)} the non-conservative term $F^{*}$ satisfies the subcritical linear growth condition,
i.e., there exist $\hat\delta>0$, $c_6, c_7\ge 0$ such that
\begin{equation}\label{sdd-2nd-21}
||F^{*}(u)||^2 \le c_6 +c_7 ||A^{{1\over 2}-\hat\delta}u||^2 \quad
\hbox{for any}\quad u\in D(A^{1\over 2}).
\end{equation}
}
\end{itemize}
As for the delay term, we concentrate on the case of
\emph{ discrete state-dependent delay} and impose the following hypothesis.
\begin{itemize}
  \item[{\bf (M3)}] {\it  The nonlinear delay term $M: W \mapsto H$
  has the form $ M(u_t)=  G(u(t-\tau(u_t)))$,
where $\tau$ maps $W$ into the interval $[0,h]$ and $G$ is a
globally Lipschitz mapping from $L_2(\Om)$ into itself.
}\end{itemize}
\begin{remark}\label{re:M-tau}
{\rm Since the term $M(u_t)$  satisfying  {\bf (M3)} can be  written in the form
\begin{equation}\label{sdd-2nd-26}
 M(u_t)=  G(u(t-\tau(u_t)))\equiv G\left( u (t) - \int^t_{t-\tau(u_t)} \dot u(s)\, ds\right),
\end{equation}
we have that
\begin{equation*}
||M(u_t)||  \le ||G(0)||+L_G\left[ ||u(t))|| +
\int^t_{t-h} || \dot u(s)||\, ds\right],
\end{equation*}
where  $L_G$ is the Lipschitz constant of the mapping $G$. This yields that
\begin{equation}\label{M-bound}
||M(u_t)||^2\le g_0+ g_1 ||u(t))||^2 +g_2(h)
\int^t_{t-h} || \dot u(s)||^2\, ds
\end{equation}
with $g_0=4||G(0)||^2$, $g_1=4 L_G^2$ and $g_2(h)=2 L_G^2 h$.
Thus {\bf (M3)} implies {\bf (M2)}. To guarantee {\bf (M1)}
we need to assume that
 $\tau$
 is locally Lipschitz on $W$:
 \begin{equation*}
   |\tau(\varphi^1)-\tau(\varphi^2)|\le C_\vr \left[    |\varphi^1-\varphi^2|_{C_{1/2}}+
      |\dot\varphi^1-\dot\varphi^2|_{C_0}\right]
 \end{equation*}
 for every $\varphi^1, \varphi^2\in W$,   $|\varphi^j|_W\le\vr$, $j=1,2$.
Indeed, from  \eqref{sdd-2nd-26} we have that
\begin{align*}
||M(u^1_s)-M(u^2_s)|| \le & L_G ||u^1(s-\tau(u^1_s))-
u^1(s-\tau(u^2_s))|| + L_G ||u^1(s-\tau(u^2_s))-
u^2(s-\tau(u^2_s))||\notag
\\
\le & \vr L_G  |\tau(u^1_s)-\tau(u^2_s)|+L_G  \max_{\theta\in
[-h,0]}||u^1(s+\theta)-u^2(s+\theta)||
 \notag \\
 \le  & (1+\vr C_\vr)L_G
|u^1_s-u^2_s|_W
\end{align*}
for all $u^1_s, u^2_s\in W$,   $|u^j_s|_W\le\vr$, $j=1,2$.
 Instead of the structure presented in \textbf{(M3)} we can also take
 a delay term of the form
 \[
 M(u_t)= \sum_{k=1}^N G_k(u(t-\tau_k(u_t))),
 \]
 or even consider  an integral version of this sum.
 Moreover instead of \textbf{(M3)} we can postulate the property  in \eqref{M-bound}
 with the constants $g_0$, $g_1$ independent of $h$ and $g_2(h)\to 0$ as $h\to 0$.
  }
\end{remark}

Our first step in the study of qualitative behavior  of the system $(S_t,W)$ is the following  (ultimate)
dissipativity property.

\begin{proposition}\label{pr:diss}
   Let assumptions (A1), (F1), (F2), (F3), (M1) and (M3) be valid.
Then for  any $k_0$ there exists $h_0=h(k_0)>0$ such that
for every $(k,h)\in [k_0,+\infty)\times (0,h_0]$
  the system $(S_t,W)$ is dissipative, i.e., there exists $R>0$
such that for every $\vr>0$  we can find $t_\vr>0$ such that
\begin{equation*}
    |S_t \varphi|_W\le R~~\mbox{for all}~~ \quad \varphi\in W, \quad  ~|\varphi|_W\le \vr,\quad  ~t\ge t_\vr.
\end{equation*}
Moreover for every fixed $k_0>0$ the dissipativity radius $R$ is
independent of $k>k_0$ and the delay time $h\in (0,h_0]$. Thus the
dynamical system $(S_t,W)$ is dissipative (uniformly for $k>k_0$ and
$h\le h_0$).
\end{proposition}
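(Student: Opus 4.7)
The plan is to combine the energy equality \eqref{sdd-2nd-12} with the estimate obtained by ``multiplying \eqref{sdd-2nd-01} by $u$" via a Lyapunov-type functional, and then exploit the smallness of $h$ to absorb the delay contribution. Concretely, I set
\[
V(t) = \cE(u(t),\dot u(t)) +\nu \Big[(u(t),\dot u(t)) +\tfrac{k}{2}\|u(t)\|^2\Big]
\]
for a small $\nu>0$ independent of $k$ and $h$ (with $\nu$ so small that, in view of Poincar\'e and \eqref{sdd-2nd-11}, $\tfrac14\cE(u,v)-C\le V\le 4\cE(u,v)+C$ uniformly on bounded sets). First I would establish, by the usual approximation argument on smooth solutions and passage to the limit (the second identity is formally obtained by testing \eqref{sdd-2nd-01} with $u$ and is justified for mild solutions exactly as \eqref{sdd-2nd-12} was justified in the proof of Theorem~\ref{th:well-pos}), the integral relation
\begin{align*}
V(t) + k\int_0^t\|\dot u\|^2 ds + \nu\int_0^t\Big[\|A^{1/2}u\|^2 +(F(u),u) -\|\dot u\|^2\Big]ds
 = V(0) - \int_0^t (F^*(u)+M(u_s),\dot u)\,ds - \nu\int_0^t(M(u_s),u)\,ds.
\end{align*}

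Next I would use \textbf{(F3a)} to convert $\nu(F(u),u)$ into $+\nu(1-\eta)\|A^{1/2}u\|^2 +\nu c_4\Pi_0(u) -\nu c_5$, and \textbf{(F3c)} together with \textbf{(F3b)} (applied with $\tilde\eta$ arbitrarily small) and the compact embedding $D(A^{1/2-\hat\delta})\subset D(A^{1/2})$ with Ehrling's lemma to bound $\|F^*(u)\|^2$ by an arbitrarily small fraction of $\|A^{1/2}u\|^2+\Pi_0(u)$ plus a constant; the scalar products with $\dot u$ are then handled by Young's inequality against $k\int\|\dot u\|^2$. Choosing $\nu$ small relative to $k_0$, all quadratic pieces in $u,\dot u$ (and $\Pi_0(u)$) on the right can be swallowed by the corresponding good terms on the left, producing
\[
V(t)+\beta\int_0^t V(s)\,ds \le V(0) + C\,t_{\text{const}} + \Theta(t),
\]
where $\beta>0$ depends only on $k_0$ and $\Theta(t)$ collects the remaining delay terms.

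The main obstacle is controlling $\Theta(t)$ independently of $k\ge k_0$. Here I would invoke \eqref{M-bound} (which follows from \textbf{(M3)} as noted in Remark~\ref{re:M-tau}, with $g_2(h)=2L_G^2 h$) together with Young's inequality:
\[
\Theta(t) \le \epsilon\int_0^t\|\dot u\|^2 ds + C_\epsilon\int_0^t\|M(u_s)\|^2 ds \le \epsilon\int_0^t\|\dot u\|^2 ds + C_\epsilon\int_0^t\Big(g_0+g_1\|u(s)\|^2+g_2(h)\!\int_{s-h}^s\!\|\dot u(\sigma)\|^2 d\sigma\Big)ds.
\]
Swapping the order in the double integral gives $\int_0^t g_2(h)\int_{s-h}^s\|\dot u\|^2\,d\sigma\,ds \le h\, g_2(h)\int_{-h}^t\|\dot u\|^2 d\sigma = 2L_G^2 h^2 \int_{-h}^t\|\dot u\|^2 d\sigma$. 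Choosing $h_0=h_0(k_0)$ small enough that $2C_\epsilon L_G^2 h_0^2\le k_0/4$ (and $\epsilon\le k_0/4$) makes the $\|\dot u\|^2$-contribution absorbable into $k\int_0^t\|\dot u\|^2 ds$ on the left, while $\|u\|^2$ is reduced by \textbf{(F3b)} to $\|A^{1/2}u\|^2+\Pi_0(u)$ with small constant. The initial data contribution $\int_{-h}^0\|\dot u\|^2$ is bounded by $h_0|\varphi|_W^2$ and is absorbed into the initial value.

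The resulting integral inequality $V(t)+\tfrac{\beta}{2}\int_0^t V(s)ds\le V(0)+C_1+C_2 h_0|\varphi|_W^2$, via Gronwall applied to $W(t)=V(t)e^{\beta t/2}$ or directly, yields $\cE(u(t),\dot u(t))\le K(1+|\varphi|_W^2)e^{-\gamma t}+R_0^2$ with $\gamma,K,R_0$ depending only on $k_0$ (not on $k\ge k_0$ or $h\le h_0$). This gives dissipativity in $D(A^{1/2})\times H$ uniformly in $k,h$. Finally, to upgrade to the $W$-norm, observe that once $t\ge t_\varrho+h$ the bound $\|A^{1/2}u(s)\|^2+\|\dot u(s)\|^2\le R_0'^2$ holds for all $s\in[t-h,t]$, hence $|S_t\varphi|_W\le R=2R_0'$. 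This completes the strategy.
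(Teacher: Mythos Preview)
Your strategy has a genuine gap in the final step. After handling the delay contribution by Fubini, the integral inequality you obtain will \emph{necessarily} contain a term linear in $t$ on the right-hand side: the constants $c_5$ from \textbf{(F3a)}, $c_6$ from \textbf{(F3c)}, the $C_{\tilde\eta}$ from \textbf{(F3b)}, and $g_0$ from \eqref{M-bound} all produce, after integration over $[0,t]$, a contribution $C\,t$. So what you actually get is
\[
V(t)+\tfrac{\beta}{2}\int_0^t V(s)\,ds \;\le\; V(0)+C\,t+C_1+C_2 h_0|\varphi|_W^2,
\]
not the inequality without the $C\,t$ term that you wrote down. And this weaker inequality does \emph{not} imply boundedness of $V$: a continuous nonnegative function with triangular peaks of height $\alpha n$ and base width $n^{-2}$ centered at the integers $n$ has $\int_0^t V\sim\alpha\ln t$, so $V(t)+\tfrac{\beta}{2}\int_0^t V\le Ct+C'$ holds for all $t$ as soon as $\alpha<C$, yet $V$ is unbounded. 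In particular, no form of Gronwall applied to $V(t)e^{\beta t/2}$ rescues this, because to bound $V(t)$ from above via $V(t)\le V(0)+Ct-\tfrac{\beta}{2}\int_0^t V$, you would need a \emph{lower} bound on $\int_0^t V$, which you do not have.

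The paper avoids this by working with a \emph{differential} inequality $\frac{d}{dt}\widetilde V+\gamma a_0\widetilde V\le C$, for which a constant right-hand side is harmless. But in the differential form the delay term produces $+c\int_{t-h}^t\|\dot u(\xi)\|^2\,d\xi$, which cannot be absorbed by any instantaneous quantity. This is precisely why the paper augments the Lyapunov functional with the compensator
\[
\frac{\mu}{h}\int_0^h\!\int_{t-s}^t\|\dot u(\xi)\|^2\,d\xi\,ds,
\]
whose time derivative supplies the negative term $-\tfrac{\mu}{h}\int_{t-h}^t\|\dot u(\xi)\|^2\,d\xi$ needed to swallow the delay contribution. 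Your functional $V$ lacks this compensator, and your Fubini trick (which is correct and neat) only works at the integral level, where the linear-in-$t$ obstruction arises. To repair the argument you should either add the compensator and pass to a differential inequality, or find an additional a~priori monotonicity/continuity property of $t\mapsto V(t)$ that rules out the tall-thin-peak counterexample; the paper takes the first route.
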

\begin{remark}\label{re:dis}
{\rm \textbf{(1)}
 The dissipativity property can be written in the form
\begin{equation*}
||\dot u(t)||^2 + ||A^{1\over 2}u(t)||^2 \le R^2 \quad \hbox{ for
all } \quad t\ge t_\vr,
\end{equation*}
provided the initial function $\va\in W$ possesses the property $|\va|_W\le \vr$.
We can also show in the standard way (see, e.g., \cite{Chueshov_Acta-1999_book} or \cite{Temam_book})
that there exists  a bounded \emph{forward invariant} absorbing set
$\cB$ in $W$ which belongs to the ball $\{\va\in W\, :  |\va|_W\le R\}$ with the radius
$R$ independent of $k\in [k_0,+\infty)$.
\par
 \textbf{(2)}
As we see in the proof below the restriction on the delay time $h$
has the form $h\le\beta k_0$ for some $\beta>0$. Thus increasing the
low bound $k_0$ for the damping interval we can increase the
corresponding admissible interval for $h$. This fact is compatible
with observation that large time lag may destabilize the system.
For instance, it is known from \cite{Cooke-Grossman1982} that
for the delayed 1D ODE
\[
\ddot u(t)+k\dot u(t)+ a u(t) +  u(t-\tau)=0
\]
with $a>1$  and $2a>k^2$ there exist $0<\tau_*<\tau^*$ such that
the zero solution is  stable for all $\tau<\tau_*$
and unstable when  $\tau>\tau^*$. This example also demonstates
 the role of the large damping. Indeed, if $k^2>2a>2$, then
 (see \cite{Cooke-Grossman1982}) the zero solution is
 stable for \emph{all} $\tau\ge 0$.
Thus  large time delay requires  large damping coefficient to
stabilize the system.

}
\end{remark}
\begin{proof}
 We use the Lyapunov method  to get the
result. The presence of the delay term $M$ requires some
modifications of the standard functional $V$  usually of the second order systems
(see, e.g., the proof of Theorem 3.10
\cite[p.43-46]{Chueshov-Lasiecka-MemAMS-2008_book}).
\par
We use the following functional
\begin{equation*}
\widetilde{V}(t)\equiv {\cal E}(u(t),\dot u(t)) + \gamma(u(t),\dot
u(t))+ \frac{\mu}h \int_0^h \left\{ \int_{t-s}^t ||\dot u(\xi)||^2
d\, \xi \right\} \, ds.
\end{equation*}
Here ${\cal E}$ is defined in (\ref{sdd-2nd-10}) and the positive parameters
$\gamma$ and $\mu$ will be chosen later.
\par
The main idea behind inclusion of an additional delay term in $\widetilde{V}$  is to
find a compensator for $M(u_t)$. The compensator is determined by
the structure of the mapping $M$ (see \eqref{sdd-2nd-26}  and \eqref{M-bound}). This idea
was already applied in \cite[p.480]{Chueshov-Lasiecka-2010_book} and
\cite{CLW-delay} in the study of a flow-plate interaction model
which contains a linear constant delay term with the critical spatial
regularity. The corresponding compensator has a different form in
the latter case.
\par
One can see from \eqref{sdd-2nd-08} that there is $0<\ga_0<1$ such that
\begin{equation}\label{sdd-2nd-23a}
\hf E(u(t),\dot u(t)) -c\le
\widetilde{V}(t)\le 2 E(u(t),\dot u(t)) +  \mu \int_0^h ||\dot u(t-\xi)||^2 d\, \xi +c.
\end{equation}
for every $0<\ga\le \ga_0$, where $c$ does not depend on $k$.
\par
Let us consider the time
derivative of $\widetilde{V}$ along a solution.
One can easily check that
\begin{align}\label{sdd-2nd-24}
\frac{d}{dt} (u(t),\dot u(t))= \|\dot u(t)\|^2 -k(u(t),\dot u(t))  -
||A^{1\over 2}u(t)||^2 -(u,F(u)) - (u,M(u_t)).
\end{align}
 Combining (\ref{sdd-2nd-24}) with the energy relation in \eqref{sdd-2nd-12}
 and using the estimate  $k(u,\dot u)\le k^2\|\dot u\|^2+\frac14\|u\|^2$
   we get
\begin{align*}
\frac{d}{dt} \widetilde{V}(t) \le  & 
-(k -\gamma(1+k^2) -\mu) ||\dot u(t)||^2    -(F^{*}(u(t))+
M(u_t),\dot u(t)) \\ & - \gamma \left(-\frac14\|u(t)\|^2+
||A^{1\over 2}u(t)||^2 +(u,F(u))+ (u,M(u_t)) \right) -\frac{\mu}h
\int_0^h ||\dot u(t-\xi)||^2 d\, \xi. \notag
\end{align*}
Using (\ref{sdd-2nd-21}) we get
$$ |(F^{*}(u(t)),\dot u(t))| \le {1\over 8}k ||\dot u(t)||^2 +
{2\over k}||F^{*}(u(t)) ||^2 \le {1\over 8}k ||\dot u(t)||^2 +
{2c_6\over k} + {2c_7\over k}||A^{1/2-\delta} u(t)||^2.
$$
Hence using the inequality  $|(M(u_t),\dot u(t))| \le {1\over 8}k
||\dot u(t)||^2 + {2\over k}||M(u_t) ||^2$ and also estimate (\ref{M-bound}) we obtain that
\begin{align*}
-(F^{*}(u(t))+ M(u_t),\dot u(t)) \le & \frac14 k \|\dot u(t)\|^2
+\frac{c_0}k\left[1+\|A^{1/2-\delta} u(t)\|^2 + ||u(t)||^2 \right] \\ &
+\frac{g_2(h)}{k}\int_0^h ||\dot
u(t-\xi)||^2 d\, \xi, \notag
 \end{align*}
 where $c_0= 2 \max \{c_7; c_6+ g_0, g_1\}>0$ does not depend on $k$.
 \par
In a similar way (see (\ref{M-bound})) we also have that
\begin{align*}
|(u(t),M(u_t))|
\le g_2(h) \int^h_0 ||\dot u(t-\xi)||^2 d\, \xi + C(g_0,g_1)(1+ ||u(t)||^2).
\end{align*}
The relations in  \eqref{sdd-2nd-19} and  (\ref{sdd-2nd-20})
with small enough $\widetilde{\eta}>0$ yields
\[
 C(g_0,g_1)(1+ ||u||^2)  -||A^{1\over 2}u||^2 -(u,F(u))\le - 3 a_0
E(u, \dot u) + \|\dot u \|^2+ a_1
\]
for  some $a_i>0$.
Thus it follows from  the relations above that
\begin{align*}
\frac{d}{dt} \widetilde{V}(t) \le & 
-\left(\frac34 k -\gamma(2+k^2) -\mu \right) ||\dot u(t)||^2
+\frac{c_0}k\left[1+\|A^{1/2-\delta} u(t)\|^2+\|u(t)\|^2\right] \notag \\ & +
\gamma \left(- 3 a_0 E(u(t), \dot u(t)) + a_1  \right) + \left[
-\frac{\mu}h +\left(\frac{2}{k} +\gamma\right)g_2(h)\right] \int_0^h ||\dot u(t-\xi)||^2 d\, \xi .
\end{align*}
As in \cite[p.45]{Chueshov-Lasiecka-MemAMS-2008_book} using (\ref{sdd-2nd-20})  we can conclude
\[
\frac{c_0}k\left[ \|A^{1/2-\delta}u(t)\|^2+  \|u(t)\|^2\right]\le  \gamma a_0  E(u(t), \dot
u(t))+ \frac1k b\Big(\frac1{\ga k}\Big),
\]
where $b(s)$ is a non-decreasing function. Thus using
\eqref{sdd-2nd-23a} we arrive at the relation
\begin{align}\label{sdd-2nd-25b}
\frac{d}{dt} \widetilde{V}(t) +\ga a_0 \widetilde{V}(t)  \le &
-\left( \frac34 k -\gamma(2+k^2) -\mu \right) ||\dot u(t)||^2    +
\ga \left[ \tilde{a}+\frac1{\ga k}\tilde{b}\Big(\frac1{\ga k}\Big)\right],
\notag \\ & + \left[ -\frac{\mu}h +\mu\ga a_0+
\left(\frac{2}{k} +\gamma\right)g_2(h)\right] \int_0^h
||\dot u(t-\xi)||^2 d\, \xi .
\end{align}
Take $\mu =\frac{k}4$ and $\ga=\frac{\si k}{4+2k^2}$, where $0<\si<1$ is
chosen such that $\ga\le \ga_0$ for all $k>0$ (the bound $\ga_0$ arises in \eqref{sdd-2nd-23a}).
 Assume also that $h$ is such that
\begin{equation}\label{h-restr}
    -\frac{k}{4h} +\frac{\ga k}4 a_0+ \left(\frac{2}{k} +\gamma\right)g_2(h)\le 0.
\end{equation}
Then \eqref{sdd-2nd-25b} implies that
\begin{align}\label{sdd-2nd-25c}
\frac{d}{dt} \widetilde{V}(t) +\ga a_0 \widetilde{V}(t)  \le
\ga \left[ \tilde{a}+\frac1{\ga k}\tilde{b}\Big(\frac1{\ga k}\Big)\right],
\end{align}
One can see there is $\si_0=\si_0(k_0)$ such that  $\si_0\le \ga k \le \si/2$ for all $k\ge k_0$.
Therefore  from \eqref{sdd-2nd-25c}
we obtain that
\begin{align}\label{sdd-2nd-25d}
 \widetilde{V}(t) \le\widetilde{V}(0) e^{-\ga a_0 t} +
\frac{1}{a_0}(1-e^{-\ga a_0 t}) \left[ \tilde{a}+\frac1{\si_0}\tilde{b}\Big(\frac1{\si_0}\Big)\right],
\end{align}
provided
\begin{equation}\label{h-restr2}
    -\frac{k_0}{4h} +\frac18 a_0+ g_2(h)\left(\frac{2}{k_0}+\frac{1}{2}\right)\le 0.
\end{equation}
Here we used \eqref{h-restr} and properties $\gamma k<{1\over 2},
\gamma<{1\over 2}$ which follow from the choice of $\gamma$.
 One can see that there exists
$\beta>0$ such that \eqref{h-restr2} holds when $h\le \beta k_0$.
Under this condition relation \eqref{sdd-2nd-25d} implies the
desired (uniform in $k$) dissipativity property\footnote{In fact for this property we only need that
$g_2(h)\to0$  as $h\to 0$ in estimate  \eqref{M-bound}.
}
 and completes the proof of Proposition~\ref{pr:diss}.
\end{proof}

\section{Asymptotic properties: quasi-stability}\label{sect:qs}
In this section we show that  the system $(S_t,W)$ generated by the delay equation in \eqref{sdd-2nd-01}
possesses some asymptotic compactness property which is called "quasi-stability"
(see. e.g., \cite{Chueshov-Lasiecka-2010_book}  and \cite{CL-hcdte-notes})
and means that any two trajectories of the system are convergent modulo compact term.
As it was already seen at the level of non-delayed systems (see, e.g., \cite{Chueshov-Lasiecka-MemAMS-2008_book,Chueshov-Lasiecka-2010_book,CL-hcdte-notes}
and the references therein)
this property usually leads to several important conclusions concerning global long-time dynamics
of the system.
\par
Quasi-stability requires additional hypotheses concerning the system.
We assume
\begin{itemize}
  \item[{\bf (M4)}]
  {\it There exists $\delta>0$ such that  {\it the delay term $M$} 
satisfies subcritical local  Lipschitz property i.e.  for any $\vr>0$
there exists $L(\vr)>0$ such that for any $\varphi^i,
i=1,2$ such that  $||\varphi^i||_W\le \vr$, one has
\begin{equation}\label{sdd-2nd-35}
\|M(\varphi^1) - M(\varphi^2)\| \le L(\vr)
\max_{\theta\in [-h,0]} ||A^{1/2-\delta}(\varphi^1(\theta)-\varphi^2(\theta))||.
\end{equation}
}
\end{itemize}
As in Remark~\ref{re:M-tau} one can see that \eqref{sdd-2nd-35}
holds for $M$ given by \eqref{sdd-2nd-26} if we assume that
\begin{equation}\label{sdd-2nd-35a}
|\tau(\varphi^1) - \tau(\varphi^2)| \le L_\tau(\vr)
\max_{\theta\in [-h,0]} ||A^{1/2-\delta}(\varphi^1(\theta)-\varphi^2(\theta))||.
\end{equation}
Below we also distinguish the cases of critical and subcritical
(non-delayed) nonlinearities $F$. We introduce the following
hypothesis.

\medskip
\begin{itemize}
\item[{\bf (F4)}]
{\it
We assume that the nonlinear (non-delayed) mapping $F: D(A^{1\over 2}) \to H$  satisfies
one of the following conditions:
\begin{itemize}
  \item [{\bf (a)}]\textbf{either} it is \emph{subcritical}, i.e.,  there is positive $ \eta$ such that for any $R>0$ there exists
  $L_F(R)~>~0$
such that
 \begin{equation}\label{sdd-2nd-34}
 ||F(u^1)-F(u^2)|| \le L_F(R) ||A^{{1\over 2}-\eta}(u^1-u^2)||,~~\forall\, u^1,u^2\in D(A^{\hf}),~ ||A^{1\over 2}u^i||\le R;
\end{equation}
  \item[{\bf (b)}] \textbf{ or else} it is \emph{critical}, i.e., \eqref{sdd-2nd-34} holds
with $\eta=0$, and the damping parameter $k$ is large enough.
\end{itemize}
}
\end{itemize}

\begin{theorem}[Quasi-stability]\label{th:qs}
 Let assumptions (A1), (F1), (F2), (F4), (M1),
(M2) and (M4) be in force.
Then
there exists positive constants $C_1(R)$, $\widetilde{\lambda}$ and $C_2(R)$ such that
for any two solutions $u^i(t)$
with initial data $\va^i$ and
 possessing the properties
\begin{equation}\label{sdd-2nd-22a}
||\dot u^i(t)||^2 + ||A^{1\over 2}u^i(t)||^2 \le R^2 \quad \hbox{ for
all } \quad t\ge -h,~~ i=1,2,
\end{equation}
the following quasi-stability  estimate holds:
\begin{align}\label{qs-est}
||\dot u^1(t)-\dot u^2(t)||^2 + ||A^{1\over 2}(u^1(t)- u^2(t))||^2
\le & C_1(R)  e^{-\widetilde{\lambda} t} |\va^1-\va^2|^2_W \notag \\
& + C_2(R)\max_{\xi\in [0,t]}
||A^{{1/2}-\delta}(u^1(\xi)-u^2(\xi))||^2
\end{align}
with some $\delta>0$. In the critical case $k\ge k_0(R)$ for some $k_0(R)>0$.
\end{theorem}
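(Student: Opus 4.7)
The plan is to derive a Lyapunov-type inequality for the difference $z(t)=u^1(t)-u^2(t)$, which satisfies the linear non-delayed equation
\begin{equation*}
\ddot z(t)+k\dot z(t)+Az(t)+G^F(t)+G^M(t)=0,\qquad G^F(t)=F(u^1(t))-F(u^2(t)),\quad G^M(t)=M(u^1_t)-M(u^2_t),
\end{equation*}
with zero force structure in $F$ playing no role since energy is used only in differential form. Set $E_z(t)=\tfrac12(\|\dot z(t)\|^2+\|A^{1/2}z(t)\|^2)$ and mimic the compensator trick from the proof of Proposition~\ref{pr:diss}: introduce
\begin{equation*}
V(t)=E_z(t)+\gamma(z(t),\dot z(t))+\frac{\mu}{h}\int_0^h\!\!\int_{t-s}^t\|\dot z(\xi)\|^2\,d\xi\,ds,
\end{equation*}
which is equivalent to $E_z(t)+\mu\int_{t-h}^t\|\dot z\|^2$ for $\gamma$ small. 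The integrated-velocity compensator is essential because $G^M(t)$ depends on the whole history of $\dot z$ on $[t-h,t]$ (via the representation $M(u_t)=G(u(t)-\int_{t-\tau(u_t)}^t\dot u\,ds)$ from Remark \ref{re:M-tau}).

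Next I would compute $\dot V$ using the energy identity $\dot E_z+k\|\dot z\|^2=-(G^F+G^M,\dot z)$ together with $\frac{d}{dt}(z,\dot z)=\|\dot z\|^2-\|A^{1/2}z\|^2-k(z,\dot z)-(z,G^F+G^M)$, and bound the forcing terms as follows. By \textbf{(M4)}, $\|G^M(t)\|\le L(R)\max_{\theta\in[-h,0]}\|A^{1/2-\delta}z(t+\theta)\|$, so $|(G^M,\dot z)|+|(z,G^M)|$ is absorbed into $\epsilon\|\dot z\|^2+\epsilon\|A^{1/2}z\|^2+C(R)\max_{\theta}\|A^{1/2-\delta}z(t+\theta)\|^2$. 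For $G^F$, in the \emph{subcritical} branch of \textbf{(F4)} with $\eta>0$ I use $\|G^F\|\le L_F(R)\|A^{1/2-\eta}z\|$ and the interpolation $\|A^{1/2-\eta}z\|^2\le\epsilon\|A^{1/2}z\|^2+C_\epsilon\|A^{1/2-\delta}z\|^2$ (for $0<\delta<\eta$) to absorb the leading-order contribution into $E_z$ and a small portion of $\gamma\|A^{1/2}z\|^2$ coming from $(z,G^F)$. In the \emph{critical} branch ($\eta=0$) interpolation is unavailable, so I estimate $|(G^F,\dot z)|\le\tfrac{k}{8}\|\dot z\|^2+\tfrac{2 L_F(R)^2}{k}\|A^{1/2}z\|^2$ and require $k\ge k_0(R)$ so that the term $\tfrac{2L_F(R)^2}{k}\|A^{1/2}z\|^2$ is dominated by $\gamma\|A^{1/2}z\|^2$ arising from the $-\gamma\|A^{1/2}z\|^2$ term in $\frac{d}{dt}(z,\dot z)$.

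Choosing $\gamma,\mu$ small enough (and $k\ge k_0(R)$ in the critical case) in the spirit of the dissipativity proof, and exploiting the negative contribution $-\tfrac{\mu}{h}\int_0^h\|\dot z(t-s)\|^2ds$ from differentiating the compensator to absorb the history contributions of $G^M$ and $(z,G^M)$, I arrive at a differential inequality of the form
\begin{equation*}
\frac{d}{dt}V(t)+\widetilde{\lambda}V(t)\le C(R)\max_{\xi\in[-h,t]}\|A^{1/2-\delta}z(\xi)\|^2.
\end{equation*}
Integrating by Gronwall and using the comparability $cE_z(t)\le V(t)\le C(E_z(t)+\mu h\max_{[t-h,t]}\|\dot z\|^2)$ and $V(0)\le C|\varphi^1-\varphi^2|_W^2$, together with the bound $\max_{\xi\in[-h,0]}\|A^{1/2-\delta}z(\xi)\|^2\le|\varphi^1-\varphi^2|_W^2$ which is absorbed into the $e^{-\widetilde{\lambda}t}$ term, yields precisely \eqref{qs-est}.

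The main obstacle is the critical case \textbf{(F4)(b)}: when $\eta=0$, the cross term $(F(u^1)-F(u^2),\dot z)$ produces $\|A^{1/2}z\|^2$ at full strength, and absorbing it into the negative definite part of $\dot V$ is only possible when $k$ is sufficiently large relative to $L_F(R)$, which is exactly why the hypothesis $k\ge k_0(R)$ appears there. A secondary technical nuisance is the fact that $G^M(t)$ depends on the full history of $z$ on $[t-h,t]$, so the compensator must exactly match the $g_2(h)\int_{t-h}^t\|\dot z\|^2$-type remainder produced by rewriting the delay term via $u(t)-\int_{t-\tau(u_t)}^t\dot u$ and applying \textbf{(M4)}; this is what forces the integrated-velocity term in the definition of $V$.
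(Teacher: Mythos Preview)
Your Lyapunov-functional approach is sound and would yield the estimate, but it is genuinely different from what the paper does. The paper splits the argument: in the subcritical case it works directly from the variation-of-constants formula \eqref{sdd-2nd-05a}, using the exponential decay of $e^{-t\cA}$ in $Y$ and bounding $\|\cN(U^1_s)-\cN(U^2_s)\|_Y$ by the compact seminorm via \textbf{(F4)(a)} and \textbf{(M4)}; in the critical case it uses the multiplier method integrated over a fixed interval $[0,T]$ (multiplying the $z$-equation by $\dot z$ and by $z$, combining the resulting integral inequalities) to obtain a discrete-time contraction $E_z(T)\le e^{-\omega T}E_z(0)+C_{R,T,k}\int_0^T\max_\theta\|A^{1/2-\delta}z(s+\theta)\|^2\,ds$, and then iterates on $[mT,(m+1)T]$. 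Your route unifies both cases into a single differential inequality $\dot V+\wla V\le C(R)\cdot[\text{compact}]$ followed by Gronwall, which is more compact; the paper's interval-plus-iteration argument, on the other hand, makes the order of parameter choices ($T$, then $\eps$, then $k$) in the critical case more explicit.

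One genuine confusion in your proposal: the compensator $\frac{\mu}{h}\int_0^h\int_{t-s}^t\|\dot z(\xi)\|^2\,d\xi\,ds$ is \emph{not} needed here, and your justification for it is based on the wrong hypothesis. You borrowed it from the proof of Proposition~\ref{pr:diss}, where only \textbf{(M3)} is assumed and the bound \eqref{M-bound} does produce a $\int_{t-h}^t\|\dot u\|^2$ remainder. But Theorem~\ref{th:qs} assumes \textbf{(M4)} (and not \textbf{(M3)}), and under \textbf{(M4)} the difference $G^M(t)=M(u^1_t)-M(u^2_t)$ is bounded \emph{directly} by the compact quantity $L(R)\max_{\theta\in[-h,0]}\|A^{1/2-\delta}z(t+\theta)\|$; no $\dot z$-history appears. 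Hence $|(G^M,\dot z)|+|(z,G^M)|\le\eps\|\dot z\|^2+\eps\|A^{1/2}z\|^2+C_\eps(R)\max_\theta\|A^{1/2-\delta}z(t+\theta)\|^2$, and the simpler functional $V=E_z+\gamma(z,\dot z)$ already does the job. The representation \eqref{sdd-2nd-26} you invoke is not even available under the hypotheses of Theorem~\ref{th:qs}. Including the compensator is harmless but vacuous.
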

We emphasize that Theorem~\ref{th:qs} does not assume  \textbf{(F3)} and \textbf{(M3)}
and deals only with a pairs of uniformly bounded solutions.
However, if the conditions in  \textbf{(F3)} and \textbf{(M3)} are valid, then by Proposition~\ref{pr:diss}
and Remark~\ref{re:dis}(1) there exists on a bounded forward invariant absorbing set.
Thus under the conditions of Proposition~\ref{pr:diss}
we can apply Theorem~\ref{th:qs} on this set.
Namely, we have the following assertion.
\begin{corollary}\label{co:qs}
Let conditions (A1), (F1)-(F4) and (M3) with \eqref{sdd-2nd-35a} be in force.
Let $\cB_0$ ba a forward invariant absorbing set for $(S_t,W)$
such that $\cB_0\subset \{\va\in W : |\va |_W\le R\}$. Then there exist  $C_i(R)>0$
and $\widetilde{\lambda}>0$ such that \eqref{qs-est} holds for any pair of solutions $u^1(t)$
and $u^2(t)$ starting from $\cB_0$.
\end{corollary}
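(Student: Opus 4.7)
The plan is to derive Corollary~\ref{co:qs} as a direct application of Theorem~\ref{th:qs} to pairs of trajectories starting in the forward invariant absorbing set $\cB_0$. Two things must be checked: that the hypotheses (A1), (F1), (F2), (F4), (M1), (M2), (M4) required by Theorem~\ref{th:qs} follow from the assumptions of the corollary, and that trajectories originating in $\cB_0$ automatically satisfy the uniform bound \eqref{sdd-2nd-22a}. Hypothesis (F3) is listed for self-containedness (it guarantees the existence of $\cB_0$ via Proposition~\ref{pr:diss}) but is not used in the proof itself.

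Among the remaining hypotheses, (A1), (F1), (F2), (F4) are assumed directly, while (M1) and (M2) follow from (M3) combined with \eqref{sdd-2nd-35a} exactly as in Remark~\ref{re:M-tau}; note that \eqref{sdd-2nd-35a} is stronger than the locally Lipschitz property of $\tau$ used there, since $\|A^{1/2-\delta}v\|\le \mu_1^{-\delta}\|A^{1/2}v\|$. The key verification is (M4), for which I would reuse the splitting from Remark~\ref{re:M-tau}:
\[
\|M(\va^1)-M(\va^2)\|\le L_G\|\va^1(-\tau(\va^1))-\va^1(-\tau(\va^2))\|+L_G\|\va^1(-\tau(\va^2))-\va^2(-\tau(\va^2))\|.
\]
The first term is bounded by $\vr\,|\tau(\va^1)-\tau(\va^2)|$ using $\|\dot\va^1(\theta)\|\le\vr$, and then by $\vr L_\tau(\vr)\max_\theta\|A^{1/2-\delta}(\va^1-\va^2)(\theta)\|$ via \eqref{sdd-2nd-35a}; the second term is bounded by $\max_\theta\|\va^1(\theta)-\va^2(\theta)\|\le \mu_1^{-(1/2-\delta)}\max_\theta\|A^{1/2-\delta}(\va^1-\va^2)(\theta)\|$. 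Adding the two estimates gives \eqref{sdd-2nd-35} for any admissible $\delta\in(0,1/2)$.

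For the uniform a priori bound, I would invoke forward invariance of $\cB_0$: if $\va^i\in\cB_0$ then $u^i_t=S_t\va^i\in\cB_0\subset\{|\va|_W\le R\}$ for every $t\ge 0$, whence $\|\dot u^i(s)\|^2+\|A^{1/2}u^i(s)\|^2\le 2R^2$ for every $s\ge 0$, while for $s\in[-h,0]$ the same bound is immediate from $|\va^i|_W\le R$. Thus \eqref{sdd-2nd-22a} holds (after a harmless rescaling of $R$) and Theorem~\ref{th:qs} applies verbatim to deliver \eqref{qs-est}; in the critical case the required $k\ge k_0(R)$ is already built into \textbf{(F4)(b)}. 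I do not anticipate any substantial obstacle: the statement is essentially a repackaging of Theorem~\ref{th:qs} under the concrete delay ansatz (M3), and the only nonautomatic step is the derivation of (M4) from \eqref{sdd-2nd-35a}, which is why I would write out the decomposition above explicitly rather than merely refer back to Remark~\ref{re:M-tau}.
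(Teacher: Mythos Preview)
Your proposal is correct and matches the paper's approach: the paper does not give a separate proof for Corollary~\ref{co:qs} but simply observes, in the paragraph preceding it, that under \textbf{(M3)} with \eqref{sdd-2nd-35a} the delay term satisfies \textbf{(M4)} (as noted just before the statement of \textbf{(M4)}) and that Theorem~\ref{th:qs} then applies on the forward invariant absorbing set. Your write-up is in fact more detailed than the paper's, spelling out the verification of \textbf{(M4)} and the passage from $|\va|_W\le R$ to \eqref{sdd-2nd-22a}, both of which the paper leaves implicit.
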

\begin{remark}\label{re:qs}
  {\rm
  Taking in \eqref{qs-est}  maximum over the interval $[t-h,t]$
 yields
 \begin{align}\label{qs-est2}
| S_tu^1-S_tu^2|_W
\le   C_1(R) he^{\widetilde{\lambda} h} e^{-\widetilde{\lambda} t} |\va^1-\va^2|_W
+
C_2(R)h\max_{s\in [0,t]}\mu_{W} (u^1_s-u^2_s), ~~ t\ge h.
\end{align}
 where $\mu_{W} (\va)\equiv \left\{\max_{\theta\in [-h,0]}
||A^{{1\over 2}-\delta}\va(\theta)||\right\}$ is a compact semi-norm\footnote{We
recall  that a semi-norm $\widetilde{n}(x)$ on a Banach space $X$ is
said to be compact iff for any bounded set $B\subset X$ there exists
a sequence $\{x^n\}\subset B$ such that $\widetilde{n}(x^m - x^k)\to
0$ as $m,k\to \infty$. }
on $W$.
The quasi-stability property in \eqref{qs-est2} has the structure
  which is different from the standard form
  (see, e.g., \cite{Chueshov-Lasiecka-MemAMS-2008_book,Chueshov-Lasiecka-2010_book,CL-hcdte-notes})
  of quasi-stability inequalities for (non-delayed) second order in time equations.
  However as we will see below the consequences in our case are the same as in
  the case of standard quasi-stable systems.  We also note that quasi-stability
  properties in different forms were important in many situations
  in the long-time dynamics studies
  (see, e.g., the discussion in \cite[Remark 7.9.3]{Chueshov-Lasiecka-2010_book}).
  }
\end{remark}

 We split the proof of Theorem~\ref{th:qs} in two cases
 and start with the simplest one.

 \subsubsection*{Proof of Theorem~\ref{th:qs} in the subcritical case}
 We rely on the  mild solutions  form  \eqref{sdd-2nd-05a} of the problem
 and follow the line of argument  given in
\cite[p.479-480]{Chueshov-Lasiecka-2010_book} with modifications
necessary for the case of state dependent delay force $M$.
We note that similar to \cite[p.58-62]{Chueshov-Lasiecka-MemAMS-2008_book}
 we can also use here the multipliers method.
However for the completeness we   demonstrate here the constant variation
method. The multipliers method  is presented below in the case of the critical force $F$.
\smallskip\par
 Let us consider two solutions $U^1=(u^1, \dot u^1)$ and $ U^2=(u^2, \dot u^2)$ of
(\ref{sdd-2nd-01}), (\ref{sdd-2nd-IC})
possessing   (\ref{sdd-2nd-22a}). Using (\ref{sdd-2nd-05a}) and exponential stability of
the semigroup $e^{-\cA t}$ in the space
 $Y=D(A^{1/2})\times H$ we have
that
\begin{equation}\label{sdd-2nd-36}
 ||U^1(t)-U^2(t)||_Y \le e^{-\widetilde{\lambda} t} ||U^1(0)-U^2(0)||_Y  +\int^t_0 e^{-\widetilde{\lambda}(t-s) }
 ||{\cal N} (U^1_s)-{\cal N} (U^2_s)||_Y \, ds,\quad t>0,
\end{equation}
with $\wla>0$, where $\cN$ is given by \eqref{N-def}.
Since
\[
||{\cal N} (U^1_s)-{\cal N} (U^2_s)||_Y \le
||F(u^1(t))-F(u^2(t))|| + ||M(u^1_t)-M(u^2_t)||,
\]
 using
properties  \eqref{sdd-2nd-35} and (\ref{sdd-2nd-34})
we obtain
\[
||{\cal N} (U^1_s)-{\cal N} (U^2_s)||_Y \le
C(R)\max_{\theta\in [-h,0]} ||A^{{1\over
2}-\delta}(u^1(s+\theta)-u^2(s+\theta))||
\]
for some $\delta>0$.
Thus \eqref{sdd-2nd-36} yields
\begin{align}\label{sdd-2nd-36a}
 ||U^1(t)-U^2(t)||_Y \le & e^{-\widetilde{\lambda} t} ||U^1(0)-U^2(0)||_Y +C(R) I(t,u^1-u^2),\quad t>0,
 \end{align}
 where
\begin{equation*}
  I(t,z) =\int^t_0 e^{-\widetilde{\lambda}(t-s) }
 \max_{\ell\in [-h,0]} ||A^{{1\over 2}-\delta}z(s+\ell)|| \, ds ~~\mbox{with $z(s)=u^1(s)-u^2(s)$}.
\end{equation*}
Now we split $  I(t,z)$ as   $I(t,z)=  I^1(t,z)+I^2(t,z)$, where
\begin{align*}
I^1(t,z) \equiv & \int^h_0 e^{-\widetilde{\lambda} (t-s)}
\max_{\ell\in [-h,0]} ||A^{{1\over 2}-\delta}z(s+\ell)||\, ds \le
C_{R,h} |z_0|_W  \int^h_0 e^{-\widetilde{\lambda} (t-s)}\, ds
\\
= & C_{R,h} |z_0|_W  \cdot
e^{-\widetilde{\lambda} t} (e^{\widetilde{\lambda} h}
-1)\widetilde{\lambda}^{-1}
\end{align*}
and
\begin{align*}
I^2(t,z) \equiv & \int^t_h  e^{-\widetilde{\lambda} (t-s)}
\max_{\ell\in [-h,0]} ||A^{{1\over 2}-\delta}z(s+\ell)||\, ds
\\
\le & \int^t_0  e^{-\widetilde{\lambda} (t-s)} \max_{\xi\in [0,t]}
||A^{{1\over 2}-\delta}z(\xi)||\, ds =   (1-e^{-\widetilde{\lambda}
t}) \widetilde{\lambda}^{-1} \cdot  \max_{\xi\in [0,t]} ||A^{{1\over
2}-\delta}z(\xi)||.
\end{align*}
Thus  \eqref{sdd-2nd-36a} yields the desired estimate in \eqref{qs-est} for the subcritical  nonlinearity $F$.

 \subsubsection*{Proof of Theorem~\ref{th:qs} in the critical case
 with large damping}

We follow the line of  the arguments
of \cite[p. 85, Theorem 3.58]{Chueshov-Lasiecka-MemAMS-2008_book}. 
\smallskip\par

Let $u^1$ and $u^2$ be solutions satisfying  \eqref{sdd-2nd-22a}. Then
 $z=u^1-u^2$ solves the equation
\begin{equation}\label{sdd-2nd-44}
\ddot z(t) +A z(t) + k\dot z(t) = - F_{1,2}(t)- M_{1,2}(t)
\end{equation}
with
\[
F_{1,2}(t)\equiv F(u^1(t))-F(u^2(t)); ~~ M_{1,2}(t)\equiv  M(u^1_t)-M(u^2_t).
\]
We multiply the last equation by $\dot z(t)$ and integrate over
$[t,T]$:
\begin{equation}\label{sdd-2nd-45}
E_z(T) - E_z(t) + k \int^T_t ||\dot z(s)||^2\, ds = -\int^T_t
(F_{1,2}(s),\dot z(s))\, ds - \int^T_t (M_{1,2}(s),\dot z(s))\, ds.
\end{equation}
Here we denote $E_z (t)\equiv \hf(||\dot z(t)||^2 +
||A^{\hf} z(t)||^2)$.
\par
One can check that there is constant $C_R>0$ such that
\[
|(G_{1,2}(t),\dot z(t))| \le
\varepsilon ||A^{1\over 2} z(t)||^2 + \frac{C_R}{\varepsilon}
||\dot z(t)||^2,~~ \forall\, \eps>0.
\]
 Similarly, using assumption \textbf{(M4)}, we have
\[
|(M_{1,2}(t),\dot z(t))| \le \max_{\theta\in [-h,0]}
||A^{{1\over 2}-\delta} z(t+\theta)||^2 + C_R||\dot z(t)||^2.
\]
 Hence, we get from
(\ref{sdd-2nd-45})
\begin{multline}\label{sdd-2nd-46a}
\Big|E_z(T) - E_z(t) + k \int^T_t ||\dot z(s)||^2\, ds\Big| \\\
\le   \varepsilon \int^T_t ||A^{1\over 2} z(s)||^2\, ds +
\int^T_t \max_{\theta\in [-h,0]} ||A^{{1\over
2}-\delta} z(s+\theta)||^2\, ds + C_R\left(1+ {1\over
\varepsilon}\right) \int^T_t ||\dot z(s)||^2\, ds
\end{multline}
for every $\eps>0$.
Below we choose (assume that) $k$ is big enough to satisfy (see the
the last term  in  (\ref{sdd-2nd-46a}))
\begin{equation}\label{sdd-2nd-48}
C_R\left(1 + {1\over \varepsilon}\right) < {k\over
2}, \quad \mbox{ for all} \quad k\ge k_0.
\end{equation}
This choice is made for the simplification of the estimates only
(the final choice of $k_0$ to be done after the choice of
$\varepsilon$).
Now we multiply (\ref{sdd-2nd-44}) by $ z(t)$ and integrate over
$[0,T]$, using integration by parts. This yields
\begin{multline*}
(\dot z(T),z(T)) - (\dot z(0),z(0)) - \int^T_0 ||\dot z(s)||^2\,
ds + \int^T_0 ||A^{1\over 2} z(s)||^2\, ds + k\int^T_0 (\dot z(s), z(s))\, ds
\\
\le {1\over 2}\int^T_0 ||A^{1\over 2} z(s)||^2\, ds +  \widetilde{C_R} \int^T_0 || z(s)||^2\, ds +
\widetilde{C_R} \int^T_0 \max_{\theta\in [-h,0]} ||A^{{1\over
2}-\delta} z(s+\theta)||^2\, ds.
\end{multline*}
Hence, using the definition of
$E_z$ after (\ref{sdd-2nd-45})
and the relation
\[
 k\int^T_0 (\dot z(s), z(s))\, ds \le  \hf\int^T_0 \|\dot z(s)\|^2 \, ds+ \frac{k^2}2\int^T_0 \| z(s)\|^2 \, ds,
\]
we obtain that
\begin{align}\label{sdd-2nd-47}
\hf \int^T_0 ||A^{1\over 2} z(s)||^2\, ds \le & \frac32\int^T_0 ||\dot z(s)||^2\,
ds + C(E_z(0)+E_z(T)) \notag \\
& +
\widetilde{C_R}(k) \int^T_0 \max_{\theta\in [-h,0]}
||A^{{1\over 2}-\delta} z(s+\theta)||^2\, ds.
\end{align}
 From (\ref{sdd-2nd-46a}) with $t=0$ and using
(\ref{sdd-2nd-48}) we get
\begin{align}\label{sdd-2nd-49}
E_z(0) \le & E_z(T) + {3k\over 2} \int^T_0 ||\dot z(s)||^2\, ds +
\varepsilon \int^T_0 ||A^{1\over 2} z(s)||^2\, ds
\notag
\\ &+
\int^T_0 \max_{\theta\in [-h,0]} ||A^{{1\over
2}-\delta} z(s+\theta)||^2\, ds.
\end{align}
It follows from (\ref{sdd-2nd-46a}) with help of integration over $[0,T]$ (we use
(\ref{sdd-2nd-48}) again) that
\begin{equation}\label{sdd-2nd-50}
T E_z(T)\le \int^T_0 E_z(s)\, ds  +  \varepsilon T \int^T_0 ||A^{1\over 2} z(s)||^2\,
ds + T \int^T_0 \max_{\theta\in [-h,0]} ||A^{{1\over
2}-\delta} z(s+\theta)||^2\, ds.
\end{equation}
Another consequence of (\ref{sdd-2nd-46a}) for $t=0$, using
(\ref{sdd-2nd-48}), is
\begin{equation}\label{sdd-2nd-51}
{k\over 2}\int^T_0 ||\dot z(s)||^2\, ds \le E_z(0) + \varepsilon
\int^T_0 ||A^{1\over 2} z(s)||^2\, ds + \int^T_0
\max_{\theta\in [-h,0]} ||A^{{1\over 2}-\delta} z(s+\theta)||^2\,
ds.
\end{equation}
Considering the sum of (\ref{sdd-2nd-51}) and (\ref{sdd-2nd-47})
and assuming that $k\ge8$
 we can get
\begin{align}\label{sdd-2nd-52}
{k}\int^T_0 ||\dot z(s)||^2\, ds + \int^T_0 E_z (s)\, ds \le & C(E_z(0)+E_z(T))
 + 4\varepsilon \int^T_0 ||A^{1\over 2}
z(s)||^2\, ds\notag  \\ &
+ C^*_{R,k}\int^T_0 \max_{\theta\in [-h,0]}
||A^{{1\over 2}-\delta} z(s+\theta)||^2\, ds.
\end{align}
Now we add to the both sides of (\ref{sdd-2nd-52}) the value
${1\over 2} T E_z(T)$ and use (\ref{sdd-2nd-50})
\begin{multline}\label{sdd-2nd-53}
  {k}\int^T_0 ||\dot z(s)||^2\, ds + \hf \int^T_0 E_z (s)\, ds + {1\over 2} T E_z(T)
\\ \le 4 \varepsilon (1+T)
\int^T_0 ||A^{1\over 2} z(s)||^2\, ds  +  C(E_z(0)+E_z(T))  \\
 + C_{R,k}(1+T)\int^T_0
\max_{\theta\in [-h,0]} ||A^{{1\over 2}-\delta} z(s+\theta)||^2\,
ds.
\end{multline}
Now we evaluate $E_z(0)+E_z(T) $. Using (\ref{sdd-2nd-49}) we have that
\begin{align*}
  E_z(0)+E_z(T) \le & 2E_z(T) + {3k\over 2} \int^T_0 ||\dot z(s)||^2\, ds +
\varepsilon \int^T_0 ||A^{1\over 2} z(s)||^2\, ds    \\ & +
\int^T_0 \max_{\theta\in [-h,0]} ||A^{{1\over
2}-\delta} z(s+\theta)||^2\, ds.
\end{align*}
Substituting this  into (\ref{sdd-2nd-53}) we get that
\begin{multline*}
 {1\over 2}\int^T_0 E_z (s)\, ds + \left({1\over 2} T-2C\right) E_z(T)
 \le c_0k \int^T_0 ||\dot z(s)||^2\, ds\\
+  c_1 \varepsilon \left(1+T\right) \int^T_0 ||A^{1\over 2}
z(s)||^2\, ds +
 \widetilde{C_R}(k)(1+T)\int^T_0 \max_{\theta\in
[-h,0]} ||A^{{1\over 2}-\delta} z(s+\theta)||^2\, ds
\end{multline*}
Assuming that
\begin{equation}\label{sdd-2nd-54}
{1\over 2} T-2C>1,
\end{equation}
we get
\begin{multline}\label{sdd-2nd-55}
  E_z(T) + {1\over 2}\int^T_0 E_z (s)\, ds
\le  C_1 \varepsilon \left(1+T\right) \int^T_0 ||A^{1\over 2} z(s)||^2\, ds
\\
+ \left(1+T \right) \widetilde{C_R}(k)\int^T_0 \max_{\theta\in [-h,0]}
||A^{{1\over 2}-\delta} z(s+\theta)||^2\, ds + \widetilde{c_0} k
\int^T_0 || \dot z(s)||^2\, ds.
\end{multline}
To estimate the last term in (\ref{sdd-2nd-55}) we use
(\ref{sdd-2nd-46a}) with $t=0$ (remind (\ref{sdd-2nd-48})) to get
\begin{equation*}
{k\over 2} \int^T_0 ||\dot z(s)||^2\, ds \le E_z(0) - E_z(T) +
\varepsilon \int^T_0 ||A^{1\over 2} z(s)||^2\, ds +
\int^T_0 \max_{\theta\in [-h,0]} ||A^{{1\over
2}-\delta} z(s+\theta)||^2\, ds.
\end{equation*}
So, we can rewrite (\ref{sdd-2nd-55}) as
\begin{align}\label{sdd-2nd-57}
E_z(T)& + {1\over 2}\int^T_0 E_z (s)\, ds \le 2c_0(E_z(0) - E_z(T))+
\notag \\
 & C_1 \varepsilon \left(1+T\right) \int^T_0 ||A^{1\over 2} z(s)||^2\, ds +
\left(1+T \right) \widetilde{C_R}(k)\int^T_0 \max_{\theta\in
[-h,0]} ||A^{{1\over 2}-\delta} z(s+\theta)||^2\, ds.
\end{align}
Since $||A^{1\over 2} z(s)||^2 \le 2 E_z(s) $, the choice of small
$\varepsilon>0$ to satisfy
\begin{equation}\label{sdd-2nd-58}
C_1 \varepsilon \left(1+T\right) < {1\over 4}
\end{equation}
simplifies (\ref{sdd-2nd-57}) as follows
$$ E_z(T) \le  \widetilde{c_0}(E_z(0) - E_z(T)) + \left(1+T\right) \widetilde{C_R}(k)\int^T_0
\max_{\theta\in [-h,0]} ||A^{{1\over 2}-\delta} z(s+\theta)||^2\,
ds. $$ The last step is
$$
E_z(T) \le {\widetilde{c_0}\over 1+\widetilde{c_0}} E_z(0) +
\widetilde{C_R}(T,k)\int^T_0 \max_{\theta\in [-h,0]} ||A^{{1\over
2}-\delta} z(s+\theta)||^2\, ds.
$$
Since  $\gamma\equiv {\widetilde{c_0}\over 1+\widetilde{c_0}} < 1$
this  means that there is $w>0$ such that
\begin{equation}\label{sdd-2nd-59}
E_z(T) \le e^{-w T} E_z(0) + C_{R,T,k} \int^T_0 \max_{\theta\in
[-h,0]} ||A^{{1\over 2}-\delta} z(s+\theta)||^2\, ds.
\end{equation}
We mention that the parameters were chosen in the following order.
First we choose $T>h$ to satisfy (\ref{sdd-2nd-54}), next we choose
small $\varepsilon>0$ to satisfy (\ref{sdd-2nd-58}) and finally we
choose $k$ big enough to satisfy (\ref{sdd-2nd-48}).

Now  using the same step by step procedure ($mT\mapsto (m+1)T)$ as in the
Remark 3.30 \cite{Chueshov-Lasiecka-MemAMS-2008_book}
we can derive  the conclusion in \eqref{qs-est}
 from the relation in \eqref{sdd-2nd-59} written on the interval $[mT,(m+1)T]$.
 Thus the proof of Theorem \ref{th:qs} is complete.

\section{Global  and exponential attractor}\label{sect:attr}

In this section relying on Proposition~\ref{pr:diss} and Theorem~\ref{th:qs}
we establish the existence of
a global attractor and study its properties.
We recall
(see, e.g., \cite{Babin-Vishik,Chueshov_Acta-1999_book,Temam_book})
that
a \textit{global attractor}  of the  dynamical system  $(S_t, W)$
is defined as a bounded closed  set $\fA\subset W$
which is  invariant ($S(t)\fA=\fA$ for all $t>0$) and  uniformly  attracts
all other bounded  sets:
$$
\lim_{t\to\infty} \sup\{{\rm dist}_W(S(t)y,\fA):\ y\in B\} = 0
\quad\mbox{for any bounded  set $B$ in $W$.}
$$
We note (see, e.g., \cite{Temam_book}) that the global attractor consists of bounded
full trajectories. In the case of the delay system $(S_t,W)$ a full trajectory
can be described as a function $u$ from   $C(\Rset, D(A^{1/2}))\cap C^1(\Rset, H)$
possessing the property $S_tu_s=u_{t+s}$ for all $s\in \Rset$, $t\ge 0$.

The main consequence of dissipativity and quasi-stability given by Proposition~\ref{pr:diss} and Theorem~\ref{th:qs}
is the following theorem.

\begin{theorem}[Global Attractor]\label{th:attractor}
 Let assumptions (A1) and (F1)-(F4)  be in force.
Assume that  the term $M(u_t)$ has form \eqref{sdd-2nd-26}
with $\tau: W \mapsto [0,h]$ possessing property \eqref{sdd-2nd-35a}.
Then  the dynamical system $(S_t,W)$
generated by \eqref{sdd-2nd-01}
possesses the compact global attractor $\fA$ of
finite fractal dimension
\footnote{
For the definition and some  properties of
the {\em fractal dimension}, see, e.g., \cite{Chueshov_Acta-1999_book} or \cite{Temam_book}.
}.
Moreover,
 for any full trajectory $\{u(t)\, : t\in\Rset\}$
such that $u_t\in\fA$ for all $t\in\Rset$ we have that
 \begin{equation}\label{attr-sm}
    \ddot{u}\in L_\infty(\Rset, H),~~ \dot{u}\in L_\infty(\Rset, D(A^{1/2}))
    ~~ u\in L_\infty(\Rset, D(A))
 \end{equation}
 and
 \begin{equation}\label{attr-sm2}
    \|\ddot{u}(t)\|+ \|A^{1/2}\dot{u}(t)\|+  \|A u(t)\|\le R_*,~~~ \forall\, t\in\Rset.
 \end{equation}
Under the hypotheses of Corollary~\ref{co:smooth} we also have that $\fA$ is a bounded set in
in $W_{sm}$ and lies in
$\cL$,
where $W_{sm}$ and $\cL$ are given by \eqref{Wsm} and \eqref{L-smoth}.
\end{theorem}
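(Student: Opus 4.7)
The plan is to run the standard programme for quasi-stable systems of \cite{Chueshov-Lasiecka-MemAMS-2008_book,Chueshov-Lasiecka-2010_book}, carried out directly in the non-Hilbert phase space $W$. The two ingredients are already in place: Proposition~\ref{pr:diss} with Remark~\ref{re:dis}(1) supplies a bounded forward-invariant absorbing set $\cB_0\subset W$ contained in some ball of radius $R$, and Corollary~\ref{co:qs} together with Remark~\ref{re:qs} gives, on $\cB_0$, the quasi-stability inequality
\[
|S_t\va^1-S_t\va^2|_W \le K_1(R)e^{-\wla t}|\va^1-\va^2|_W + K_2(R)\max_{s\in[0,t]}\mu_W(u^1_s-u^2_s),\quad t\ge h,
\]
with $\mu_W(\psi)=\max_{\theta\in[-h,0]}\|A^{1/2-\delta}\psi(\theta)\|$ a compact semi-norm on $W$.

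For existence of the attractor I would deduce asymptotic compactness of $(S_t,W)$ on $\cB_0$ directly from the display: choosing $t$ large so that the exponential term is arbitrarily small leaves only a residual compact-seminorm perturbation, whence Kuratowski's measure of non-compactness of $S_t\cB_0$ tends to zero. Together with the absorbing set this yields, by the standard Ladyzhenskaya theorem, a compact global attractor $\fA\subset\cB_0$. For finite fractal dimension I would iterate the display once, picking $T$ so large that $K_1 e^{-\wla T}<1/2$, to obtain on the invariant set $\fA$ a dichotomy of the form
\[
|S_T\va^1-S_T\va^2|_W \le \tfrac12|\va^1-\va^2|_W + K_2\, n(\va^1,\va^2),
\]
with $n$ a compact semi-norm on $W\times W$, and then apply the discrete $\eps$-net covering as in Chapter~7 of \cite{Chueshov-Lasiecka-2010_book}. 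The main obstacle here is that $W$ is only a Banach space, so the Hilbertian Man\'e-type projection must be replaced by a purely metric $\eps$-net construction in $W$; this is exactly the ``reconstruction of the corresponding argument'' announced in the introduction.

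For the smoothness claims \eqref{attr-sm}--\eqref{attr-sm2} I would exploit backward invariance of $\fA$. Given a full trajectory $u$ with $u_t\in\fA$ for all $t\in\Rset$, I apply the quasi-stability inequality to $u^1=u$ and $u^2=u(\cdot+\eta)$ using $u_t=S_\tau u_{t-\tau}$ and let $\tau\to\infty$; the leading term vanishes since orbits are bounded on $\fA$, and one obtains
\[
|u_{t+\eta}-u_t|_W \le K_2\sup_{s\in\Rset}\mu_W(u_{s+\eta}-u_s),
\]
whose right-hand side is $O(\eta)$ by equicontinuity on the compact set $\fA$. Passing to the limit through difference quotients yields $\dot u\in L_\infty(\Rset;D(A^{1/2}))$ and $\ddot u\in L_\infty(\Rset;H)$ with uniform bounds, after which rearranging \eqref{sdd-2nd-01} gives $u\in L_\infty(\Rset;D(A))$, completing \eqref{attr-sm}--\eqref{attr-sm2}. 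Finally, under the hypotheses of Corollary~\ref{co:smooth} the same bootstrap upgrades these bounds to $W_{sm}$-bounds on $\fA$, while the compatibility condition \eqref{comp-C} holds trivially along any full trajectory (it is just \eqref{sdd-2nd-01} evaluated at one time), so $\fA\subset\cL$.
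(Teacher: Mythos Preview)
Your existence argument and your regularity argument are essentially the paper's own: asymptotic smoothness via the quasi-stability inequality (the paper invokes the Ceron--Lopes criterion, which is the same mechanism as your Kuratowski argument), and smoothness on $\fA$ by applying \eqref{qs-est} to a full trajectory and its $\eta$-shift, sending the starting time to $-\infty$. One caveat on the latter: ``$O(\eta)$ by equicontinuity on $\fA$'' is not quite enough---equicontinuity only gives $o(1)$. The paper (following \cite[pp.~102--103]{Chueshov-Lasiecka-MemAMS-2008_book}) uses that the residual seminorm involves $A^{1/2-\delta}$, interpolates $\|A^{1/2-\delta}z\|\le C\|A^{1/2}z\|^{1-\theta}\|z\|^{\theta}$, and feeds in $\|u(\xi+\eta)-u(\xi)\|\le R\eta$ to close a bootstrap for the difference quotients. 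You should make this step explicit.

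The genuine gap is in the finite-dimension step. The compact term in \eqref{qs-est2} is $K_2\max_{s\in[0,T]}\mu_W(u^1_s-u^2_s)$, which depends on the \emph{solutions} on $[0,T]$, not on the initial data. Since the solution map is nonlinear, this is not a seminorm of $\va^1-\va^2$ (nor of $(\va^1,\va^2)$), so you cannot write the displayed dichotomy $|S_T\va^1-S_T\va^2|_W\le\tfrac12|\va^1-\va^2|_W+K_2\,n(\va^1,\va^2)$ with $n$ a compact seminorm on $W$, and the abstract covering theorem does not apply directly. The paper resolves this by the M\'alek--Ne\v{c}as short-trajectory trick: it lifts the dynamics to the space $W(-h,T)$ of trajectory pieces, on which the shift $\sR_T$ acts and the residual $n(\va)=\sup_{s\in[0,T]}\|A^{1/2-\delta}\va(s)\|$ \emph{is} a genuine compact seminorm (Arzel\`a--Ascoli). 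Lemma~\ref{le:V-qs} then gives exactly the squeezing hypothesis of \cite[Theorem~2.15]{Chueshov-Lasiecka-MemAMS-2008_book} for $\sR_T$ on $\fA_T$, yielding $\dim_f^{W(-h,T)}\fA_T<\infty$, and the Lipschitz restriction $r_h:W(-h,T)\to W$ pushes this down to $\fA$. This passage to the trajectory space is precisely the ``reconstruction'' you allude to; without it your dimension argument does not go through.
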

\begin{proof}

Since the system $(S_t,W)$ is dissipative (see Proposition~
\ref{pr:diss}) for the existence of a compact global attractor we need to
prove that $(S_t,W)$ is asymptotically smooth.\footnote{
According \cite{Ha88}
this means that for any bounded forward invariant set $B$ in $W$
there exists a compact set $K$ in $W$ which attracts uniformly
$S_tB$ as $t\to+\infty$.
}
 For this we can use the
 Ceron-Lopes type criteria (see, e.g.,  \cite{Ha88} or
\cite{Chueshov-Lasiecka-MemAMS-2008_book})
which in fact states (see \cite[p.19, Corollary 2.7]{Chueshov-Lasiecka-MemAMS-2008_book})
that the quasi-stability  estimate in \eqref{qs-est2} implies that
 $(S_t,W)$  is an asymptotically smooth dynamical system.
Thus
the existence of a compact global attractor  is established.
\par

To get the finite dimensionality of the attractor we apply the same idea  as in
\cite{Chueshov-Lasiecka-MemAMS-2008_book} and \cite{Chueshov-Lasiecka-2010_book}
which is originated from the
M\'{a}lek--Ne\v{c}as method of "short" trajectories (see
\cite{malek-ne,malek}).
However we use a completely different choice of the space
of "short" trajectories which is motivated by the delay structure of the model and
the choice of the phase space.
\par
As in \cite{Chueshov-Lasiecka-MemAMS-2008_book,Chueshov-Lasiecka-2010_book}
 we
rely on  the abstract result  \cite[Theorem
2.15, p.23]{Chueshov-Lasiecka-MemAMS-2008_book} on finite
dimensionality of bounded closed sets in a Banach space which are invariant with respect to
a Lipschitz mapping  possessing some squeezing property.
We consider
the auxiliary space
\begin{equation*}
 W(-h,T) \equiv  C([-h,T]; D(A^{1\over 2})) \cap C^1([-h,T]; H),~~ T>0,
\end{equation*}
endowed with the norm
\[
|\varphi|_{W(-h,T)}=\max_{s\in [-h,T]} ||A^{1/2} \varphi(s)||+
  \max_{s\in [-h,T]} ||\dot \varphi(s)||.
\]
We note that in the case $T=0$ we have $W(-h,0)=W$. Thus $W(-h,T)$
is the space of extensions  with the same smoothnes of functions from $W$ on the interval
$[-h,T]$.
\par
Let $\sB$ be a set in the phase space $W$. We denote by $\sB_T$ the
set of functions $u\in W(-h,T)$ which solve \eqref{sdd-2nd-01} with
initial data $u_{t\in[-h,0]}=\psi\in \sB$. We interpret $\sB_T$ as a
set of "pieces" of trajectories starting from $\cB$.
 We also define the shift
 (along solutions to  (\ref{sdd-2nd-01})) operator $\sR_T\, :\sB_T\mapsto  W(-h,T)$ by the
formula
\begin{equation}\label{sdd-2nd-RT}
(\sR_Tu)(t)= u(T+t),~~ t\in [-h,T],
\end{equation}
where
$u$ is the solution to (\ref{sdd-2nd-01}) with initial data from $\sB$.
\par
The following lemma states that the mapping $\sR_T$ satisfies
some contractive property modulo compact terms.

\begin{lemma}\label{le:V-qs}
Let $\sB$ be a forward  invariant set for the dynamical system
$(S_t,W)$ such that $\sB\in\{ \phi\, : |\phi|_W\le R\}$ for some
$R$. Let $T>h$. Then $\sB_T$ is forward invariant with respect  to
the shift operator  $\sR_T$ and
\begin{align}
    \label{sdd-2nd-dim1}
|\sR_T\varphi^1 - \sR_T\varphi^2|_{W(-h,T)} \le & c_1(R)e^{-
\widetilde{\lambda}(T-h)} |\varphi^1 - \varphi^2|_{W(-h,T)}
 \notag \\
 &+ c_2(R)\left[  n(\varphi^1 - \varphi^2)+ n(\sR_T\varphi^1 - \sR_T\varphi^2)\right]
\end{align}
for every $\va^1,\va^2\in\sB_T$, where $n(\va)=\sup_{s\in [0,T]}
||A^{{1\over 2}-\delta}\va(s)||$ is a compact seminorm  (see the footnote in Remark ~\ref{re:qs}
  for the definition) on  the space $W(-h,T)$.
\end{lemma}
\begin{proof}
 The invariance of $\sB_T$ is obvious due to the construction.
 The relation in \eqref{sdd-2nd-dim1} follows from Theorem~\ref{th:qs}.
 The compactness of the seminorm $n$ is implied by the
 infinite dimensional version of Arzelа--Ascoli  theorem,
 see the Appendix in \cite{Chueshov-Lasiecka-2010_book}, for instance.
\end{proof}

We choose  $T>h$ such that
 $\eta_T=c_1(R)e^{- \widetilde{\lambda} (T-h)}  <1$ and take $\sB=\fA$,
 where $\fA$ is the global attractor.
It is clear that
the set $\fA_T$ is strictly invariant. Therefore we can apply
\cite[Theorem 2.15, p.23]{Chueshov-Lasiecka-MemAMS-2008_book} to get
the finite dimensionality of the set $\fA_T$ in $W(-h,T)$. The finial step is to
consider the restriction mapping
\[
r_h\, : \{u(t), t\in [-h,T]\}\mapsto  \{u(t), t\in [-h,0]\}
\]
which is obviously Lipschitz continuous from $W(-h,T)$ into $W$.
Since $r_h\fA_T=\fA$ and  Lipschitz mappings do not increase
fractal dimension of a set, we conclude that
 $$
 \dim^W_f {\fA} \le \dim^{W(-h,T)}_f {\fA}_T <\infty.
 $$
 \par
 To prove the regularity properties in \eqref{attr-sm} and \eqref{attr-sm2}
 we can use Theorem~\ref{th:qs} and the same
 idea as in \cite{Chueshov-Lasiecka-MemAMS-2008_book,Chueshov-Lasiecka-2010_book},
 see  also \cite{CL-hcdte-notes}. Indeed,
  let $\ga=\{ u(t)\, :\; t\in\Rset\}$ be a full trajectory
of the system,  i.e., $(S_tu_s)(\theta)=u(t+s+\theta)$ for $\theta\in [-h,0]$.
Assume that $u_t\in\fA$ for all $t\in\Rset$.
Consider the difference of this trajectory and its small shift
  $\ga_\varepsilon=\{ u(t+\varepsilon)\, :\; t\in\Rset\}$ and apply the inequality
  in \eqref{qs-est} with starting point at $s\in\Rset$:
\begin{align*}
||\dot u(t+\varepsilon)-\dot u(t)||^2 + ||A^{1\over
2}(u(t+\varepsilon)- u(t))||^2
\le & C_1(R)  e^{-\widetilde{\lambda} (t-s)} |u_{s+\varepsilon}-u_{s}|^2_W \notag \\
& + C_2(R)\max_{\xi\in [s,t]}
||A^{{1/2}-\delta}(u(\xi+\varepsilon)-u(\xi))||^2.
\end{align*}
 Since $u_s\in \fA$ for all $s\in\Rset$, in the limit $s\to-\infty$ we obtain that
  \begin{align*}
||\dot u(t+\varepsilon)-\dot u(t)||^2 + ||A^{1\over
2}(u(t+\varepsilon)- u(t))||^2 \le  C_2(R)\sup_{\xi\in [-\infty,t]}
||A^{{1/2}-\delta}(u(\xi+\varepsilon)-u(\xi))||^2.
\end{align*}
Now in the same way as in
\cite[p.102,103]{Chueshov-Lasiecka-MemAMS-2008_book} or in
\cite[p.386,387]{Chueshov-Lasiecka-2010_book}
We can conclude that
 \[
 \frac1{\varepsilon^2}\left[ ||\dot u(t+\varepsilon)-\dot u(t)||^2 + ||A^{1\over 2}(u(t+\varepsilon)- u(t))||^2
 \right]
  \]
  is uniformly bounded in $\varepsilon\in (0,1]$. This implies (passing with the limit $\varepsilon\to 0$) that
\[
  ||\ddot u(t)||^2 + ||A^{1\over 2}\dot u(t)||^2\le C_R.
  \]
  Now using  equation \eqref{sdd-2nd-01} we conclude that  $||A u(t)||^2\le C_R$.
  This gives (\ref{attr-sm}) and (\ref{attr-sm2}).
  \par
  The final statement follows from Corollary~\ref{co:smooth} and Remark~\ref{re:smooth}.
  \par
 This completes the proof of Theorem~\ref{th:attractor}.
 \end{proof}

Now  we present a result on the existence of fractal exponential
attractors. We recall the following definition.

\begin{definition}[cf. \cite{EFNT94}]\label{de7.3.2}
A compact set $\fA_{\rm exp}\subset W$
is said to be (generalized)  fractal exponential
attractor
for  the dynamical system $(S_t,W)$  iff  $\fA$ is a positively invariant set
whose fractal dimension is finite (in some extended space  $\cW\supset W$)  and
for every bounded set $D\subset W$ there exist positive constants
$t_D$, $C_D$ and $\gamma_D$ such that
\begin{equation}\label{7.3.4}
d_W\{S_tD\, |\, A_{\rm exp}\}\equiv \sup_{x\in D} \mbox{dist}\,_W
(S_tx,\, \fA_{\rm exp})\le C_D\cdot e^{-\gamma_D(t-t_D)}, \quad t\ge
t_D.
\end{equation}
\end{definition}
This  concept has been introduced in \cite{EFNT94} in the case when $\cW$ and $W$ are the same.
For  details  concerning fractal exponential attractors
we refer to \cite{EFNT94} and also to recent survey \cite{MirZel-08}.
We only mention that (i) a global attractor can be non-exponential and (ii) an exponential attractor
is not unique and   contains the global attractor.
\par

Using the quasi-stability estimate and ideas presented in \cite{Chueshov-Lasiecka-MemAMS-2008_book,Chueshov-Lasiecka-2010_book}
we can construct fractal exponential attractors for the system considered.

\begin{theorem}\label{th:exp-attr}
Let the hypotheses of Theorem~\ref{th:attractor}   be in force.
Then the dynamical system $(S_t,W)$ possesses a (generalized) fractal exponential attractor
whose dimension is finite in the space
\[
\cW \equiv  C([-h,T]; D(A^{{1\over 2}-\delta})) \cap C^1([-h,T]; H_{-\delta}),~~~\forall\, \delta>0,
\]
where $H_{-s}$, $s>0$, denotes the closure of $H$ with respect   to
the norm $\|A^{-s}\cdot\|$.
\end{theorem}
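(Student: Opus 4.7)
The plan is to build the exponential attractor in two stages: first treat the discrete-time system generated by the time-$T$ map $S\equiv S_T$ on a bounded absorbing set, then lift the resulting discrete attractor to the continuous flow. The quasi-stability inequality \eqref{qs-est2} from Remark~\ref{re:qs} supplies the key technical input throughout.

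By Proposition~\ref{pr:diss} and Remark~\ref{re:dis}(1), fix a bounded, forward-invariant absorbing set $\cB\subset\{\varphi\in W:|\varphi|_W\le R\}$ and pick $T>h$ so large that the factor $C_1(R)h e^{\wla h}e^{-\wla T}$ in \eqref{qs-est2} is strictly less than $1$. Then for $\varphi^1,\varphi^2\in\cB$ one obtains a discrete quasi-stability inequality of the form
\[
|S\varphi^1-S\varphi^2|_W\le q\,|\varphi^1-\varphi^2|_W+K(R)\max_{s\in[0,T]}\mu_W(u^1_s-u^2_s),\qquad q<1,
\]
with a compact seminorm on the right. Together with the Lipschitz property \eqref{sdd-2nd-lip-sol}, this is exactly the input required by the abstract existence theorem for fractal exponential attractors of discrete quasi-stable systems developed in \cite{Chueshov-Lasiecka-MemAMS-2008_book,Chueshov-Lasiecka-2010_book}. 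Applying that theorem yields a compact, $S$-positively-invariant set $\fA^\ast\subset\cB$ of finite fractal dimension in $W$ that exponentially attracts $\cB$ under iterates of $S$, and dissipativity propagates this attraction to all bounded subsets of $W$.

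Next, define the candidate continuous-time attractor by $\fA_{\rm exp}\equiv\bigcup_{t\in[0,T]}S_t\fA^\ast$. Splitting any $t\ge 0$ as $t=nT+r$ with $r\in[0,T)$ and using the uniform Lipschitz bound \eqref{sdd-2nd-lip-sol} to estimate $|S_r S^n\varphi - S_r\psi|_W$ by $|S^n\varphi-\psi|_W$ for $\psi\in\fA^\ast$, one obtains exponential attraction in $W$ of every bounded set by $\fA_{\rm exp}$, together with positive invariance under $(S_t)_{t\ge 0}$; compactness in $W$ follows from compactness of $\fA^\ast$ and continuity of $t\mapsto S_t\varphi$.

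The main obstacle, and the reason the dimension is claimed in the weaker space $\cW$, is to verify finite fractal dimensionality of $\fA_{\rm exp}$. This reduces, via a standard covering argument (see e.g.\ \cite[Ch.~7]{Chueshov-Lasiecka-2010_book}), to uniform Hölder continuity of the orbit map $t\mapsto S_t\varphi$ from $[0,T]$ into $\cW$ for $\varphi\in\cB$, combined with finite dimensionality of $\fA^\ast$ in $W$ and the continuous embedding $W\hookrightarrow\cW$. On $\cB$ equation \eqref{sdd-2nd-01}, hypotheses \textbf{(F1)} and \textbf{(M2)}, and the bounds $\|\dot u\|,\|A^{1/2}u\|\le R$ imply $\|A^{-1/2}\ddot u(t)\|\le C_R$ uniformly. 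Interpolating via the moment inequality $\|A^{-\delta}v\|\le \|v\|^{1-2\delta}\|A^{-1/2}v\|^{2\delta}$ between the trivial bound $\|\dot u(t+\tau)-\dot u(t)\|\le 2R$ and $\|A^{-1/2}(\dot u(t+\tau)-\dot u(t))\|\le C_R|\tau|$ yields $\|A^{-\delta}(\dot u(t+\tau)-\dot u(t))\|\le C_R|\tau|^{2\delta}$; the analogous interpolation $\|A^{1/2-\delta}v\|\le \|v\|^{2\delta}\|A^{1/2}v\|^{1-2\delta}$ applied to $u(t+\tau)-u(t)$, together with $\|u(t+\tau)-u(t)\|\le R|\tau|$ and $\|A^{1/2}(u(t+\tau)-u(t))\|\le 2R$, gives $\|A^{1/2-\delta}(u(t+\tau)-u(t))\|\le C_R|\tau|^{2\delta}$. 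Combined these produce $|S_{t+\tau}\varphi-S_t\varphi|_\cW\le C_R|\tau|^{2\delta}$ for $\varphi\in\cB$, which is the required Hölder modulus and completes the verification of Definition~\ref{de7.3.2}.
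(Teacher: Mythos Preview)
Your overall architecture is right --- build a discrete exponential attractor for the time-$T$ map, then smear it over $[0,T]$ and establish H\"older continuity of $t\mapsto S_t\varphi$ in the weaker space $\cW$ to control the dimension of the continuous-time attractor. This matches the paper's strategy, and your interpolation argument for the H\"older estimate is a clean way to produce the required time regularity.

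The gap is in the first stage. You write the quasi-stability inequality for $S=S_T$ on $W$ as
\[
|S\varphi^1-S\varphi^2|_W\le q\,|\varphi^1-\varphi^2|_W+K(R)\max_{s\in[0,T]}\mu_W(u^1_s-u^2_s),
\]
and then assert that the residual term is ``a compact seminorm on the right'', so that the abstract discrete exponential attractor theorem of \cite{Chueshov-Lasiecka-MemAMS-2008_book,Chueshov-Lasiecka-2010_book} applies. But $\max_{s\in[0,T]}\mu_W(u^1_s-u^2_s)$ is \emph{not} a seminorm of $\varphi^1-\varphi^2\in W$: it depends on the full nonlinear trajectory $u^j$ over $[-h,T]$, not just on the initial segment $\varphi^j=u^j|_{[-h,0]}$, and the solution map is not linear. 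The abstract results you cite require the compact residual to be of the form $n(\varphi^1-\varphi^2)$ (possibly together with $n(S\varphi^1-S\varphi^2)$) for a genuine seminorm $n$ on the phase space; the alternative quasi-stability format in \cite[Definition 7.9.2]{Chueshov-Lasiecka-2010_book}, which does allow a supremum over the trajectory, is tailored to the product phase space $D(A^{1/2})\times H$ and does not apply to the function space $W$. The paper explicitly flags this obstacle: see Remark~\ref{re:qs} and the comment in the Introduction that ``owing to the structure of the phase space we cannot apply directly the results known for abstract quasi-stable systems''.

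The paper closes this gap by the short-trajectory device: it passes to the space $W(-h,T)$ of trajectory pieces, where the element \emph{is} the whole segment $u|_{[-h,T]}$; applies Lemma~\ref{le:V-qs} to the shift map $\sR_T$ so that the residual $n(\varphi)=\sup_{[0,T]}\|A^{1/2-\delta}\varphi\|$ becomes an honest compact seminorm on $W(-h,T)$; invokes \cite[Corollary 2.23]{Chueshov-Lasiecka-MemAMS-2008_book} to obtain a discrete exponential attractor $\cA_T\subset W(-h,T)$; and only then projects back to $W$ via the Lipschitz restriction map $r_h$. Your argument needs this detour (or an equivalent repackaging of the residual as a seminorm on some auxiliary space) before the abstract theorem can be invoked.
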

\begin{proof}
Let $\cB$ be a forward invariant bounded absorbing set for
$(S_t,W)$ which exists due to Proposition~\ref{pr:diss} and Remark~\ref{re:dis}(1).
Then we apply Lemma~\ref{le:V-qs} to obtain (discrete)
quasi-stability property for the shift mapping $\sR_T$ defined in (\ref{sdd-2nd-RT}) on $\B_T$.
We choose  $T>h$ in \eqref{sdd-2nd-dim1} such that $\eta_T=c_1(R)e^{-\widetilde{\lambda} (T-h)}  <1$
and apply \cite[Corollary
2.23]{Chueshov-Lasiecka-MemAMS-2008_book}
which gives us that the mapping ${\cal R}_T$,
 possesses a fractal exponential
attractor ${\cal A}_T$.
Next, using \eqref{sdd-2nd-01} we can see that $\|\ddot
u(t)\|_{-2}<C_R$ for all $t\in\Rset$. This allows us to show that
$S_t\va$ is a H\"{o}lder continuous in $t$ in the space $\sW$, i.e.,
\begin{equation}\label{Hold}
|S_{t_1}\va-S_{t_2}\va|_{\cW}\le C_{\cB} |t_1-t_2|^\gamma,\quad
t_1,t_2\in \Rset_+,\; y\in\cB,
\end{equation}
 for some positive $\ga>0$.
 Now we consider the restriction map $r_h$
(see above) and the sets $r_h{\cal A}_T = {\cal A}\subset W$,
$A_{exp}\equiv \bigcup \{ S_t {\cal A} : t\in [0,T]\}\subset W$.
It is clear that $A_{exp}$ is forward invariant.
Since $r_h$ is Lipschitz from $W(-h,T)$ into $W$, $\cA$ is finite-dimensional. Therefore
the property in \eqref{Hold} implies that $A_{exp}$ has a finite fractal dimension
in $\cW$. As in \cite[p.123]{Chueshov-Lasiecka-MemAMS-2008_book} we can see
that $A_{exp}$ is an  exponentially
attracting set  for $(S_t,W)$. This completes the proof  of Theorem~\ref{th:exp-attr}.
\end{proof}

In conclusion of this section we note that using quasi-stability
property (\ref{qs-est}) we can also establish some other asymptotic
properties the system $(S_t,W)$. For instance,
in the same way as it is done in
\cite{Chueshov-Lasiecka-MemAMS-2008_book} and \cite{Chueshov-Lasiecka-2010_book}
we can suggest criteria which guarantee the existence of finite number of determining functionals.

\section{Examples}\label{sect:ex}
In this section we discuss several possible applications of the results above.

\subsection{Plate models}\label{sect:ex-plate}
Our main applications are related to nonlinear plate models.
\par
Let $\Om\subset \Rset^2$ be a bounded smooth domain.
In the space $H=L_2(\Om)$ we consider the following  problem
\begin{subequations}\label{plate-model}
\begin{align}
&\pd_{tt} u(t,x)+ k\pd_t u(t,x) +\Delta^2 u(t,x) + F(u(t,x)) +a u(t-\tau[u_t],x) = 0, ~~x\in\Om,~ t>0,
\label{app-1}
\\ &u =\frac{\pd u}{\pd n}=0~~\mbox{on}~~\pd\Om, ~~~ u(\theta)=\va(\theta)~~\mbox{for}~~\theta\in
[-h,0].
\label{app-2}
\end{align}
 \end{subequations}
We assume that $\tau$ is a continuous mapping from $C(-h,0;
H^2_0(\Om))\cap C^1(-h,0; L_2(\Om))$ into the interval $[0,h]$. As
it was  already mentioned in  Introduction the delay term in
\eqref{app-1} models the reaction of foundation.
\par
The model in \eqref{plate-model} can be written in the abstract form  \eqref{sdd-2nd-01}
with $A = \Delta^2$ defined on the  domain $D(A)=H^4\cap H^2_0(\Omega)$.
Here and below $H^s(\Om)$ is the Sobolev space of the order $s$
and  $H^s_0(\Om)$ is the closure of $C_0^\infty(\Om)$ in  $H^s(\Om)$.
In this case we have
$D(A^{s})=H^{4s}_0(\Om)$ for $0\le s\le 1/2$, $s\neq 1/8, 3/8$.
\par
As the simplest example of delay terms satisfying all hypotheses in \textbf{(M1)}--\textbf{(M4)} we can consider
\begin{equation}\label{tau-g}
    \tau[u_t]= g(Q[u_t]),
\end{equation}
where $g$ is a smooth mapping from $\Rset$ into $[0,h]$ and
\[
Q[u_t]=\sum_{i=1}^Nc_i u(t-\sigma_i, a_i).
\]
Here $c_i\in \Rset$, $\si_i\in [0,h]$, $a_i\in\Om$ are arbitrary
elements. We could also consider the term $Q$ with the Stieltjes integral
over delay interval $[-h,0]$ instead of the sum. Another possibility
is to consider combination of averages like
\begin{equation}\label{Q-aver}
  Q[u_t]=\sum_{i=1}^N  \int_{\Om}u(t-\sigma_i, x)\xi_i(x) dx,
\end{equation}
where   $\si_i\in [0,h]$ and $\{\xi_i\}$ are arbitrary functions from $L_2(\Om)$. We can also consider linear combinations of
these Q's and also their powers and products. The corresponding
calculations are simple and related to the fact that for every
$s>1/4$ the space $D(A^s)$ is an algebra belonging to
$C(\overline{\Om})$.
\par
As for nonlinearities $F$
satisfying  requirements \textbf{(F1)}--\textbf{(F4)}
they are the same as in \cite{Chueshov-Lasiecka-MemAMS-2008_book}
and \cite{Chueshov-Lasiecka-2010_book}.
Therefore  delay perturbations of the models considered in these
sources in the case of linear damping provides us with a series
of examples. Here we only mention three of them.

\medskip\par \noindent
 {\bf Kirchhoff model}: In this case
$\cF(u)=  f(u)-h(x)$,
 where   $h\in L_2(\Om)$, and
 \begin{equation}
    \label{phi_condition-r}
f\in {\rm Lip_{loc}}(\Rset)~~ \mbox{ satisfies }  ~~  \underset{|s|\to\infty}{\liminf}\, f(s)s^{-1}=\infty.
\end{equation}
This is a subcritical case (see assumption \textbf{(F4)}, (\ref{sdd-2nd-34})
with $\eta>0$). The growth condition in \eqref{phi_condition-r} is needed to satisfy \eqref{sdd-2nd-20}
in \textbf{(F3)}.

\medskip\par
The following two examples are critical (assumption \textbf{(F4)},
(\ref{sdd-2nd-34}) with $\eta=0$).
\medskip\par \noindent
 {\bf Von Karman model:} In this model  (see, e.g., \cite{Chueshov-Lasiecka-2010_book,Lions})
$F(u)=-[u, v(u)+F_0]-h(x)$, where
 $F_0\in H^4(\Om) $ and $h\in L_2(\Om)$ are given functions,
\begin{equation*}
[u,v] = \partial ^{2}_{x_{1}} u\cdot \partial ^{2}_{x_{2}} v +
\partial ^{2}_{x_{2}} u\cdot \partial ^{2}_{x_{1}} v -
2\cdot \partial _{x_{1}x_{2}} u\cdot \partial _{x_{1}x_{2}} v,
\end{equation*} and
the  function $v(u) $ satisfies the equations:
\begin{equation*}
\Delta^2 v(u)+[u,u] =0 ~~{\rm in}~~  \Omega,\quad \frac{\pd v(u)}{\pd
n} = v(u) =0 ~~{\rm on}~~  \pd\Om.
\end{equation*}
For details concerning properties \textbf{(F1)}--\textbf{(F4)}
we refer to \cite[Chapter 6]{Chueshov-Lasiecka-MemAMS-2008_book}
and
 \cite[Chapters 4,9]{Chueshov-Lasiecka-2010_book}.
\medskip\par\noindent
 {\bf Berger Model:} In this case $   F(u)=
 \Pi'(u)$, where
\[
\Pi(u) =
 \frac{\kappa}4 \left[\int_\Om |\nabla u|^2 dx' \right]^2 -\frac{\mu}2 \int_\Om |\nabla u|^2 dx'
- \int_\Om u(x') h(x') dx',
\]
where $\kappa>0$ and $\mu\in\Rset$ are parameters,  $h\in L_2(\Om)$.
The analysis presented in  \cite[Chapter 4]{Chueshov_Acta-1999_book} and \cite[Chapter 7]{Chueshov-Lasiecka-MemAMS-2008_book}
yields the assumptions in \textbf{(F1)}--\textbf{(F4)}.

\subsection{Wave model}\label{sect:ex-wave}

Let $\Omega\subset \Rset^n$, $n=2,3$,  be a bounded domain
   with a sufficiently  smooth boundary $\Gamma$. The exterior normal on
  $\Gamma$ is denoted by $\nu$. We consider the following wave equation
\begin{equation*}
  \pd_{tt} u - \Delta u + k \pd_t u +    f(u) +u(t-\tau[u_t])= 0
 ~~ \mbox{ in }~~
Q=[0,\infty) \times \Omega
\end{equation*}
subject to  boundary condition
either of  Dirichlet type
\begin{equation}\label{dir}
 u=0  ~~\mbox{ on }~~ \Sigma\equiv [0,\infty) \times \Gamma,
\end{equation}
or else of Robin type
\begin{equation}\label{neu}
\partial_\nu u + u = 0  ~~\mbox{ on }~~
\Sigma.
\end{equation}
The initial conditions are given by $u(\theta)=\varphi(\theta), \,
\theta\in [-h,0]$.
In this case $H=L_2(\Om)$ and
$A$ is $-\Delta$ with either the Dirichlet \eqref{dir} or the Robin \eqref{neu}
boundary conditions.
So $D(A^{1/2})$ is either $H^1_0(\Om)$ or   $H^1(\Om)$ in this case.
\par
We assume that $k$ is a positive parameter and  the function
$f \in C^2(\Rset)$ satisfies the following polynomial growth
condition: there exists a positive constant $M > 0 $ such that
\begin{equation*}
|f''(s)| \leq M (1+ |s|^{q-1}) ,
\end{equation*}
where  $q \leq 2 $ when $n =3$ and $ q < \infty $
when $n =2$.
Moreover, we assume the same lower growth condition as \eqref{phi_condition-r}.
One  can see that the hypotheses in \textbf{(F1)}--\textbf{(F4)} are satisfied (see \cite[Chapter 5]{Chueshov-Lasiecka-MemAMS-2008_book}
for the detailed discussion). Moreover we have the subcritical case if $n=2$ or $n=3$ and $q<2$.
The case $n=3$ and $q=2$ is critical.
\par
As for the delay term $u(t-\tau[u_t])$
we can assume that, as in the plate models above, $\tau[u_t]$
has the form \eqref{tau-g} with $Q[u_t]$ given by \eqref{Q-aver}.
Moreover, instead of the averaging we can consider  an arbitrary family of linear functionals
on $H^{1-\delta}(\Om)$ for some $\delta>0$, i.e., we  can take
\[
Q[u_t]=\sum_{i=1}^Nc_i l_i[u(t-\sigma_i)],
\]
where $c_i\in \Rset$, $\si_i\in [0,h]$ and $l_i\in [H^{1-\delta}(\Om)]'$ are arbitrary elements.

\subsection{Ordinary differential equations}\label{ode}

The results above can be also applied in the ODE case when
$H=\Rset^n$, $A$ ia a symmetric  $n\times n$ matrix $A$ and the
nonlinear mappings $F : \Rset^n \to \Rset^n$, $M : C([-h,0];\Rset^n)
\to \Rset^n$ obey  appropriate requirements. The  space of initial states becomes
$W=C^1([-h,0];\Rset^n)$ (c.f.\ (\ref{sdd-2nd-05})) and
hence possesses a linear structure.
\par
Thus in contrast  with the solution manifold
suggested in \cite{Walther_JDE-2003} (see also \cite{Hartung-Krisztin-Walther-Wu-2006})
our approach do not  assume any nonlinear compatibility conditions
and provides us with a well-posedness result in a \emph{linear} phase space.
In addition,  both approaches produce the same class of solutions after some time.
To illustrate this effect we  consider  the  same
second order  delay ODE as it was used  in \cite{Walther_JDE-2003}
as a motivating example:
 \begin{subequations}\label{w1w2}
\begin{align}
&\dot u=v,~~ \dot v+ k  v = f(c s(u_t)-w), ~~ t>0,
\label{w1}
\\ &u(\theta) =\va^0(\theta),~~ v(\theta)=\va^1(\theta)~~\mbox{for}~~\theta\in
[-h,0].
\label{w2}
\end{align}
 \end{subequations}Here
$k$, $c$ and $w$ are positive reals, $s$ is a state-dependent delay
(implicitly defined in  \cite{Walther_JDE-2003}), $f:\Rset\to \Rset$
is a smooth function (for more details see
\cite[pp.61-64]{Walther_JDE-2003}). In the model $u$ is a position
of a moving object and $v$ is its velocity. The result of
\cite{Walther_JDE-2003} applied to this system says that if the
initial data $(\va^0;\va^1)$ belong to $C^1([-h,0];\Rset^2)$ and
satisfy the compatibility condition 
\begin{equation}\label{w3}
    \dot\va^0(0)=\va^1(0),~~\dot \va^1(0)+ k \va^1(0) = f(c s(\va^0)-w),
\end{equation}
then \eqref{w1w2} generates (local) $C^1$-semiflow on the solution
manifold
\[
{\cal M} =\left\{ (\va^0;\va^1) \in C^1([-h,0];\Rset^2)\, :~~\mbox{(\ref{w3}) is satisfied}\right\}.
\]
Application of our Theorem~\ref{th:well-pos} to the same system (written as a second order equation
with respect to $u$) says that if
the initial data
$(\va^0;\va^1)$ belong to $C^1([-h,0];\Rset)\times C([-h,0];\Rset)$
and are compatible in the natural way (as a position and the velocity):  $\dot\va^0(\theta)=\va^1(\theta)$
for all $\theta\in [-h,0]$, then under the same conditions as in \cite{Walther_JDE-2003}
we can avoid the (nonlinear) compatibility in \eqref{w3} and construct a  local semiflow in the space
\[
\widetilde{W} =\left\{ (\va^0;\va^1)\, :~ \dot\va^0(\theta)=\va^1(\theta)
~~\mbox{for all $\theta\in [-h,0]$,}~~
\va^0\in C^1([-h,0];\Rset)
\right\}.
\]
Thus we obtain another well-posedness class for the model in
\eqref{w1w2}. Moreover, by Corollary~\ref{co:smooth} the
corresponding solution $(u(t);v(t))$ is $C^1$ for $t\ge 0$ and
satisfies \eqref{w3} for $t>h$. Hence after time $t>h$ solutions
arrive at the same solution manifold $\cM$ as in
\cite{Walther_JDE-2003}. Similarly, starting at $\cM$ after time
$t>h$ we obviously arrive at $\widetilde{W}$ (see the first equation
in \eqref{w1}). Thus both classes of initial functions
$\widetilde{W}$ and ${\cal M}$ lead to exactly the same class of
solutions for $t>h$.
\par
As a bottom line we emphasize that in the case the second order
delay equations,
the natural (linear)
``position-velocity" compatibility provides us with an alternative  point of view on dynamics  and leads
to a simpler well-posedness argument
comparing to the method of a solution manifold presented in
\cite{Walther_JDE-2003}.

\medskip

{\bf Acknowledgments.}  This work was supported in part by GA
 CR under project P103/12/2431.


\begin{thebibliography}{9}
\bibitem{Babin-Vishik}
 A.V. Babin,  M.I.~Vishik,  Attractors of Evolutionary
 Equations, Amsterdam, North-Holland, 1992.

%
%

\bibitem{oldchueshov1}
L. Boutet de Monvel, I. Chueshov,  A. Rezounenko, Long-time
behaviour of strong solutions of retarded nonlinear PDEs, Comm.
PDEs, 22 (1997)  1453--1474.




\bibitem{Chueshov-JSM-1992} I.~D. Chueshov,
On a system of equations with delay that arises in aero-elasticity
(Russian),  Teor. Funktsii Funktsional. Anal. i Prilozhen., 54,
(1990), 123--130; translation in  J. Soviet Math.  58 (1992), no. 4,
385--390.



\bibitem{Chueshov_Acta-1999_book}
I.D. Chueshov,  Introduction to the Theory of Infinite-Dimensional
Dissipative Systems, Acta, Kharkov, 1999, English translation, 2002;
http://www.emis.de/monographs/Chueshov/

%


\bibitem{chlJDE04} I. Chueshov, I. Lasiecka, Attractors for second-order
evolution equations with a nonlinear damping, J. of Dyn. and Diff.
Equations, 16 (2004) 469--512.



\bibitem{Chueshov-Lasiecka-MemAMS-2008_book}
I. Chueshov, I. Lasiecka,    Long-Time Behavior of Second Order
Evolution Equations with Nonlinear Damping, Mem. Amer. Math. Soc.
195 (2008), no. 912, viii+183 pp.

\bibitem{Chueshov-Lasiecka-2010_book}
I. Chueshov, I. Lasiecka,   Von Karman Evolution Equations.
Well-posedness and Long-time Dynamics. Springer Monographs in
Mathematics. Springer, New York, 2010.



\bibitem{CL-hcdte-notes}
Chueshov I., Lasiecka I., Well-posedness and long time behavior in nonlinear
dissipative hyperbolic-like evolutions with critical exponents,
In: Nonlinear Hyperbolic PDEs, Dispersive and Transport Equations
(HCDTE Lecture Notes, Part I), AIMS on Applied Mathematics Vol.6, G. Alberti  et al. (Eds.) AIMS,  Springfield, 2013, pp. 1--96.



\bibitem{CLW-delay}
I. Chueshov, I. Lasiecka, J.T. Webster, Attractors for delayed,
non-rotational von Karman plates with applications to flow-structure
interactions without any damping. Preprint ArXiv:1208.5245.


\bibitem{Cras-1995}
 I.~D. Chueshov,  A.~V. Rezounenko, Global attractors for a class of
retarded quasilinear partial differential equations,
 C.R.Acad.Sci.Paris, Ser.I, 321 (1995) 607-612;  (detailed
version:  Math.Physics, Analysis, Geometry, 2  (1995) no.3,
363-383).

\bibitem{Cooke-Grossman1982}
K.L. Cooke,  Z. Grossman,
 Discrete delay, distributed delay and stability switches,
 Journal of Mathematical Analysis and Applications, 86 (1982)  592--627.


\bibitem{Walther_book} O. Diekmann, S. van Gils, S. Verduyn Lunel, H-O.
Walther,  Delay Equations: Functional, Complex, and Nonlinear
Analysis,  Springer-Verlag, New York, 1995.

\bibitem{Fitzgibbon-JDE-1978} W.E. Fitzgibbon,  Semilinear functional
differential equations in Banach space,
   J. Differential Equations, 29 (1978)  1--14. 

\bibitem{EFNT94}
A. Eden, C. Foias, B. Nicolaenko, R. Temam,  Exponential Attractors
for Dissipative Evolution Equations, Research in Appl. Math. 37,
Masson, Paris, 1994.


\bibitem{Lions}  J.L.~Lions, Quelques M\'ethodes de R\'esolution des
Probl\`emes aux Limites Non Lin\'eaires, Dunod, Paris, 1969.

\bibitem{Lions-Magenes-book}  J.L. Lions,  E. Magenes,  Probl\`emes aux
Limites Non Homog\'enes et Applications,
 Dunon, Paris, 1968.


\bibitem{Hale} J. K. Hale, Theory of Functional Differential
Equations, Springer, Berlin- Heidelberg- New York, 1977.

\bibitem{Ha88}
J.K.  Hale,   Asymptotic Behavior of  Dissipative Systems. Amer.
Math. Soc., Providence, RI, 1988.

\bibitem{Hartung-Krisztin-Walther-Wu-2006} F.~Hartung, T.~Krisztin,
H.-O.~Walther, J.~Wu,
Functional differential equations with state-dependent delays:
Theory and applications.  In: Canada, A., Drabek., P. and A. Fonda
(Eds.) Handbook of Differential Equations, Ordinary Differential
Equations, vol. 3, Elsevier Science B. V., North Holland,  2006, pp.
435-545.




\bibitem{Krisztin-Arino_JDDE-2001}
T. Krisztin, O. Arino, The two-dimensional attractor of a
differential equation with state-dependent delay, J. Dynam. Diff.
Eqs., 13 (2001)  453-522.

\bibitem{Kunisch-Schappacher-JDE-1983}
    K. Kunisch, W. Schappacher, Necessary conditions for partial differential equations with delay
to generate $C_0$-semigroups, Journal of Differential Equations. 50
(1983) 49–79.



\bibitem{malek-ne} J.M$\acute{\rm a}$lek, J. Ne$\check{\rm c}$as, A finite dimensional attractor
for three dimensional flow of incompressible fluids, Journal of
Differential Equations, 127 (1996) 498--518.

\bibitem{malek} J.M$\acute{\rm a}$lek
and D. Pra$\check{\rm z}$ak, Large time behavior via the method of
$l$-tra\-jectories,  Journal of Differential Equations, 181 (2002)
 243--279.







\bibitem{Mallet-Paret} J. Mallet-Paret,  R. D. Nussbaum, P. Paraskevopoulos,
 Periodic solutions for functional-differential equations
 with multiple state-dependent time lags,
 Topol. Methods Nonlinear Anal. 3 (1994)   101--162.

\bibitem{MirZel-08}
A. Miranville, S. Zelik, Attractors for dissipative partial
differential equations in bounded and unbounded domains. In: C.M.
Dafermos, and M. Pokorny (Eds.), Handbook of Differential Equations:
Evolutionary Equations, vol. 4,  Elsevier, Amsterdam, 2008,
pp.103--200.

%

\bibitem{Rezounenko_JMAA-2007} A.V. Rezounenko, Partial differential
equations with discrete и distributed state-dependent delays,
Journal of Mathematical Analysis and Applications, 326 (2007)
1031-1045.

\bibitem{Rezounenko_NA-2009} A.V. Rezounenko, Differential equations with
discrete state-dependent delay: uniqueness and well-posedness in the
space of continuous functions,  Nonlinear Analysis: Theory, Methods
and Applications, 70 (2009) 3978--3986.


\bibitem{Rezounenko_NA-2010} A.V. Rezounenko, Non-linear partial
differential equations with discrete state-dependent delays in a
metric space, Nonlinear Analysis: Theory, Methods and Applications,
73   (2010) 1707--1714.

\bibitem{Rezounenko_JMAA-2012}
A.V. Rezounenko, A condition on delay for differential equations
with discrete state-dependent delay,  Journal of Mathematical
Analysis and Applications, 385 (2012) 506-516.

\bibitem{Rezounenko-Zagalak-DCDS-2013}
 A.V. Rezounenko,  P. Zagalak, Non-local PDEs
with discrete state-dependent delays: well-posedness in a metric
space, Discrete Contin. Dyn. Syst., {33} (2013), no. 2, 819--835.

\bibitem{Ruess-1996}%
W.M.Ruess, Existence of solutions to partial differential equations
with delay. In: Theory and Applications of Nonlinear Operators of
Accretive и Monotone type, Lecture Notes Pure Appl. Math. 178 (1996)
259-288.

\bibitem{Selva-1979}
A.P.S. Selvadurai, Elastic Analysis of Soil Foundation Interaction,
Elsevier, Amsterdam,  1979.


\bibitem{showalter}
R.E. Showalter, Monotone Operators in Banach space and Nonlinear
Partial Differential Equations, AMS, Mathematical Surveys and
Monographs, vol. 49, 1997.


\bibitem{Temam_book} R. Temam,  Infinite Dimensional Dynamical Systems
in Mechanics and Physics, Springer, Berlin-Heidelberg-New York,
1988.

\bibitem{Travis-Webb_TAMS-1974}
C.C. Travis, G.F. Webb,
Existence and stability for partial functional differential
equations, Transactions of AMS, 200 (1974) 395-418.

\bibitem{Vlasov-1966}
V.Z. Vlasov, U.N. Leontiev,  Beams, Plates, and Shells on Elastic
Foundation,
 Israel Program for Scientific Translations, Jerusalem, 1966 (translated from Russian).


\bibitem{Walther_JDE-2003} H.-O. Walther,  The solution manifold and $C\sp
1$-smoothness for differential equations with state-dependent
delay, Journal of Differential Equations, 195  (2003) 46--65. 

%

\bibitem{Wu-book} J.~Wu,  Theory and Applications of Partial Functional
Differential Equations, Springer-Verlag, New York, 1996. 


\end{thebibliography}
\end{document}